\title{Some asymptotic formulae for Bessel process\thanks{
This work was partially supported by JSPS KAKENHI 
Grant Number 24740080.}}
\author{Yuu Hariya\thanks{Mathematical Institute, 
Tohoku University, Aoba-ku, Sendai 980-8578, Japan. }}
\date{\empty}
\numberwithin{equation}{section}
\theoremstyle{plain}
\newtheorem{thm}{Theorem}[section]
\newtheorem{prop}{Proposition}[section]
\newtheorem{lem}{Lemma}[section]
\theoremstyle{definition}
\theoremstyle{remark}
\newtheorem{rem}{Remark}[section]
\begin{document}

\def\N {\mathbb{N}}
\def\R {\mathbb{R}}
\def\Q {\mathbb{Q}}

\def\calF {\mathcal{F}}

\def\kp {\kappa}

\def\ind {\boldsymbol{1}}

\def\al {\alpha }
\def\a {\al }
\def\la {\lambda }
\def\ve {\varepsilon}
\def\Om {\Omega}

\def\ga {\gamma }

\def\W {W}

\newcommand\ND{\newcommand}
\newcommand\RD{\renewcommand}

\ND\lref[1]{Lemma~\ref{#1}}
\ND\tref[1]{Theorem~\ref{#1}}
\ND\pref[1]{Proposition~\ref{#1}}
\ND\sref[1]{Section~\ref{#1}}
\ND\ssref[1]{Subsection~\ref{#1}}
\ND\aref[1]{Appendix~\ref{#1}}
\ND\rref[1]{Remark~\ref{#1}} 
\ND\cref[1]{Corollary~\ref{#1}}
\ND\eref[1]{Example~\ref{#1}}
\ND\fref[1]{Fig.\ {#1} }
\ND\lsref[1]{Lemmas~\ref{#1}}
\ND\tsref[1]{Theorems~\ref{#1}}
\ND\dref[1]{Definition~\ref{#1}}
\ND\psref[1]{Propositions~\ref{#1}}

\ND\pr[2]{P^{(#1)}_{#2}}
\ND\ex[2]{E^{(#1)}_{#2}}

\ND\br[1]{B^{(#1)}}
\ND\be[1]{R^{(#1)}}

\def\ri {\rho _{\infty }}

\def\thefootnote{{}}

\maketitle 

\begin{abstract}
 We recover in part a recent result of \cite{hm} on the asymptotic behaviors 
 for tail probabilities of first hitting times of Bessel process. 
 Our proof is based on a weak convergence argument. The same reasoning 
 enables us to derive the asymptotic behaviors for the tail probability of 
 the time at which the global infimum of Bessel process is attained, and 
 for expected values relative to local infima. 
 In addition, we give another proof of the result of 
 \cite{hm} with improvement of error estimates, which complements 
 in the case of noninteger dimensions the asymptotic formulae by 
 \cite{vdb} for first hitting times of multidimensional Brownian motion. 
\footnote{E-mail: hariya@math.tohoku.ac.jp}
\footnote{{\itshape Key Words and Phrases}. {Bessel process}; {tail probability}; {weak convergence}.}
\footnote{2010 {\itshape Mathematical Subject Classification}. Primary {60J60}; Secondary {60B10}.}
\end{abstract}

\section{Introduction}\label{;intro}
For every $\nu \in \R $ and $a>0$, we denote by $\pr{\nu }{a}$ the 
law on the space $C([0,\infty );\R )$ of real-valued continuous paths 
over $[0,\infty )$, induced by 
Bessel process with index $\nu $ (or dimension $\delta =2(\nu +1)$) 
starting from $a$. For every $b\ge 0$, we denote by $\tau _{b}$ 
the first hitting time to $b$: 
\begin{align*}
 \tau _{b}(\omega ):=\inf \{ 
 t\ge 0;\,\omega (t)=b
 \} , \quad \omega \in C([0,\infty );\R ). 
\end{align*}
In Hamana-Matsumoto \cite{hm}, they have shown the following asymptotic 
formulae for the tail probability of $\tau _{b}$ in the case $b<a$: 
for every $\nu >0$, 
\begin{align}
 \pr{\nu }{a}(\infty >\tau _{b}>t)
 &=\frac{1}{(2t)^{\nu }\Gamma (\nu +1)}
 b^{2\nu }\left\{ 
 1-\left( \frac{b}{a}\right) ^{2\nu }
 \right\} +O(t^{-\nu -\ve }), \label{;asym1}\\
 \pr{-\nu }{a}(\tau _{b}>t)
 &=\frac{1}{(2t)^{\nu }\Gamma (\nu +1)}
 a^{2\nu }\left\{ 
 1-\left( \frac{b}{a}\right) ^{2\nu }
 \right\} +O(t^{-\nu -\ve })
 \label{;asym2}
\end{align}
as $t\to \infty $ for any $\ve \in (0,\nu /(\nu +1))$. 
Here $\Gamma $ is the gamma function. 
Their proof uses computational estimates. 

One of the purposes of this paper is to give a different proof of these 
two formulae based on a weak convergence argument; 
while our proof does not 
give asymptotic estimates for remainder terms as in \eqref{;asym1} and 
\eqref{;asym2}, we think that it is straightforward. 
The same reasoning also provides the asymptotic behaviors for the tail 
probability of the time at which the global infimum of Bessel process is 
attained, and for some expected values related to local infima. 

We devote the latter half of the paper to another proof of \eqref{;asym1} 
and \eqref{;asym2}. The proof is based on an identity for hitting 
distributions that is an immediate consequence of the strong Markov 
property of Bessel process. The identity differs from the one used in 
\cite{hm} and makes it possible to do more precise estimates; 
it will be shown that the remainder terms decay at rate $t^{-2\nu }$ 
for $0<\nu <1$ except $\nu =1/2$, $(\log t)/t^{2}$ for $\nu =1$, 
and $t^{-\nu -1}$ for $\nu >1$ and $\nu =1/2$. See \tsref{;improve} 
and \ref{;improve2} and \rref{;rimprove} below. In the case the 
dimension $\delta $ is noninteger, these asymptotics complement a 
result by van den Berg \cite{vdb} that deals with first hitting times 
of Brownian motion to general compact sets in dimension greater than 
or equal to 3; repeated use of comparison of the first hitting time with 
the last exit time is the method employed there, which our argument is 
also different from. We also remark that we do not treat the case 
$\nu =0$, for which we refer the reader to the detailed study \cite{uch0} 
by Uchiyama, where obtained are asymptotics of hitting distributions in 
question as well as asymptotic estimates on density functions of first 
hitting times. 

We organize this paper as follows: 
In \sref{;results} we first prove \tref{;th1}, which recovers principal 
terms in \eqref{;asym1} and \eqref{;asym2}; we reduce the proof to 
showing that a given sequence of probability measures on $C([0,\infty );\R )$ 
is weakly convergent. This argument also proves \pref{;keyprop}, 
which is then applied to derive in \tref{;tcor} the asymptotic behavior 
for the tail probability of the time Bessel process with positive 
index attains its global infimum, and those for expected values 
involving its local infima. 
In \sref{;improved} we prove 
\tsref{;improve} and \ref{;improve2} that improve \eqref{;asym1} and 
\eqref{;asym2}; 
we do this by using an identity for hitting distributions 
given in \lref{;hitting}. Finally in the appendix, we prove 
auxiliary facts that are referred to in Sections~\ref{;results} and 
\ref{;improved}. 
\bigskip 

In the sequel we write $\Om $ for $C([0,\infty );\R )$. 
We equip $\Om $ with the topology of compact uniform convergence. 
Unless otherwise stated, $R$ denotes the coordinate 
process on $\Om $: $R_{t}(\omega ):=\omega (t),\,\omega \in \Om ,t\ge 0$. 
We also set 
\begin{align*}
 \calF _{t}:=\sigma (R_{s},s\le t),\ t\ge 0, 
 \quad \text{and}\quad \calF :=\bigvee _{t\ge 0}\calF _{t}. 
\end{align*}
For any $x,y\in \R $, we write $x\vee y=\max\{ x,y\} ,\ 
x\wedge y=\min \{ x,y\} $. Other notation will be introduced 
as needed. 

\section{Main results and proofs}\label{;results}
Throughout the paper $\nu $ denotes a positive index. 
One of the objectives of this section is to give a proof of 
\begin{thm}\label{;th1}
It holds that 
for every $b\in [0,a)$, 
\begin{align*}
 \thetag{i}&~\lim _{t\to \infty }t^{\nu }\pr{\nu }{a}
 (\infty >\tau _{b}>t)=
 \frac{1}{2^{\nu }\Gamma (\nu +1)}
 b^{2\nu }\left\{ 
 1-\left( \frac{b}{a}\right) ^{2\nu }
 \right\} ;\\
 \thetag{ii}&~
 \lim _{t\to \infty }t^{\nu }\pr{-\nu }{a}
 (\tau _{b}>t)=
 \frac{1}{2^{\nu }\Gamma (\nu +1)}
 a^{2\nu }\left\{ 
 1-\left( \frac{b}{a}\right) ^{2\nu }
 \right\} . 
\end{align*}
\end{thm}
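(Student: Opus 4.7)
The plan is to deduce (i) from (ii) by a Doob $h$-transform identity, and to prove (ii) by combining the $h$-transform between $\pr{\nu}{a}$ and $\pr{-\nu}{a}$, Bessel scaling, and the weak convergence $\pr{\nu}{a/\sqrt{t}}\Rightarrow \pr{\nu}{0}$ on $\Om $ as $t\to \infty $.

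For the reduction, the $h$-transform with $h(r)=r^{2\nu }$ (making $R^{2\nu }$ a local martingale under $\pr{-\nu}{a}$) gives, by optional stopping at $\tau _{b}$ together with $R_{\tau _{b}}=b$, the identity $\pr{\nu}{a}|_{\calF _{\tau _{b}}\cap \{\tau _{b}<\infty \}}=(b/a)^{2\nu }\pr{-\nu }{a}|_{\calF _{\tau _{b}}}$; hence $\pr{\nu }{a}(\infty >\tau _{b}>t)=(b/a)^{2\nu }\pr{-\nu }{a}(\tau _{b}>t)$ for $0<b<a$, so (ii) yields (i) (the case $b=0$ of (i) is trivial because $\pr{\nu }{a}(\tau _{0}=\infty )=1$). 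The same $h$-transform also yields
\begin{align*}
 \pr{-\nu}{a}(\tau _{b}>t)=a^{2\nu }\,\ex{\nu}{a}\!\left[R_{t}^{-2\nu }\ind _{\{\tau _{b}>t\}}\right],
\end{align*}
since $\{\tau _{b}>t\}\subset \{t<\tau _{0}\}$. Applying Bessel scaling $Y_{s}:=R_{ts}/\sqrt{t}$, whose law under $\pr{\nu}{a}$ is $\pr{\nu}{a/\sqrt{t}}$, converts this into
\begin{align*}
 t^{\nu }\pr{-\nu}{a}(\tau _{b}>t)=a^{2\nu }\,\ex{\nu}{a/\sqrt{t}}\!\left[R_{1}^{-2\nu }\ind _{\{\tau _{b/\sqrt{t}}>1\}}\right].
\end{align*}

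To identify the limit I would split the indicator as $1-\ind _{\{\tau _{b/\sqrt{t}}\le 1\}}$. For the $1$-piece, the weak convergence $\pr{\nu}{a/\sqrt{t}}\Rightarrow \pr{\nu}{0}$ together with a uniform integrability bound for $R_{1}^{-2\nu }$ (verifiable from the Bessel transition density, which near $y=0$ behaves like $y^{2\nu +1}/(2^{\nu }\Gamma (\nu +1))$ uniformly for small starting points) gives convergence to $a^{2\nu }\ex{\nu}{0}[R_{1}^{-2\nu }]=a^{2\nu }/(2^{\nu }\Gamma (\nu +1))$, the last equality by an elementary computation with the density of $R_{1}$ under $\pr{\nu}{0}$. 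For the second piece, the strong Markov property at $\tau _{b/\sqrt{t}}$ combined with the scale-invariant hitting probability $\pr{\nu}{a/\sqrt{t}}(\tau _{b/\sqrt{t}}<\infty )=(b/a)^{2\nu }$ and the fact that $\tau _{b/\sqrt{t}}\to 0$ in probability on the event of finiteness (again by Bessel scaling) reduces the contribution to $(b/a)^{2\nu }\,\ex{\nu}{0}[R_{1}^{-2\nu }]\cdot a^{2\nu }=b^{2\nu }/(2^{\nu }\Gamma (\nu +1))$. Subtracting the two pieces yields the target $(a^{2\nu }-b^{2\nu })/(2^{\nu }\Gamma (\nu +1))$, which is (ii).

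The hard step is this final limit passage: the integrand $R_{1}^{-2\nu }\ind _{\{\tau _{b/\sqrt{t}}>1\}}$ is neither bounded nor continuous, and the threshold $b/\sqrt{t}$ of the hitting-time indicator degenerates to the starting point $0$ of the limit measure, so weak convergence alone is not enough. Controlling the unbounded weight $R_{1}^{-2\nu }$ via uniform integrability under the family $\{\pr{\nu}{a/\sqrt{t}}\}$, and handling the shrinking hitting-time indicator, is the main technical work; I expect these ingredients are precisely what the weak-convergence statement of \pref{;keyprop} is designed to supply, after which the theorem follows by direct identification of the limits.
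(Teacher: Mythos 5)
Your reduction of \thetag{i} to \thetag{ii} is correct and coincides with the paper's identity \eqref{;relation}, and the identity $\pr{-\nu}{a}(\tau _{b}>t)=a^{2\nu }\ex{\nu }{a}[R_{t}^{-2\nu }\ind _{\{\tau _{b}>t\}}]$ is exactly \eqref{;preq3}; the first piece of your split is also fine, being \lref{;keylem} after scaling. The genuine gap is in the second piece. After applying the strong Markov property at $\tau _{b/\sqrt{t}}$ you must integrate $s\mapsto \ex{\nu }{b/\sqrt{t}}[R_{1-s}^{-2\nu }]$ against the law of $\tau _{b/\sqrt{t}}$ on $\{\tau _{b/\sqrt{t}}\le 1\}$, and this weight is not bounded uniformly in $t$: it increases to $(b/\sqrt{t})^{-2\nu }=t^{\nu }b^{-2\nu }$ as $s\uparrow 1$, and the best $t$-uniform bound is of order $(1-s)^{-\nu }$. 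Consequently, the facts you invoke ($\tau _{b/\sqrt{t}}\to 0$ in probability on the finiteness event, the scale-invariant hitting probability $(b/a)^{2\nu }$, uniform integrability of $R_{1}^{-2\nu }$) control only the contribution of $\{\tau _{b/\sqrt{t}}\le 1-\ve \}$. The contribution of $\{1-\ve <\tau _{b/\sqrt{t}}\le 1\}$ --- paths that hit $b$ during $((1-\ve )t,t]$ --- carries a weight of size up to $t^{\nu }$, and the crude tail bound $\pr{\nu }{a}(\infty >\tau _{b}>u)\le Cu^{-\nu }$ shows only that this term is $O(1)$ after the $t^{\nu }$ normalization, not that it vanishes as $\ve \downarrow 0$. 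If you unwind your decomposition using $\ex{\nu }{b}[R_{u}^{-2\nu }]=b^{-2\nu }\pr{-\nu }{b}(\tau _{0}>u)$, this troublesome term is precisely the integral $\int _{D_{t}}\pr{-\nu }{a}(\tau _{b}\in ds)\,\pr{-\nu }{b}(\tau _{0}\in du)$ of \lref{;hitting}, for which the paper needs a separate argument (the bounds on $J_{1}(t;\la )$ and $J_{2}(t;\la )$ with $\la =\ve t$) to establish that it is $o(t^{-\nu })$. Your sketch asserts the limit of this piece without supplying any such estimate, so the ``hard step'' you correctly identify remains open.

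The appeal to \pref{;keyprop} does not close this gap: that proposition concerns the weak convergence of the laws obtained by tilting $\pr{\nu }{a}$ with $(R_{t_{n}})^{-2\nu }$ toward the \emph{fixed} measure $\pr{\nu }{a}$, not the convergence $\pr{\nu }{a/\sqrt{t}}\Rightarrow \pr{\nu }{0}$ that your scaling argument requires, and it is itself proved by the very argument you would be bypassing. That argument is worth noting because it avoids your difficulty altogether: normalizing $\ex{\nu }{a}[R_{t}^{-2\nu };\,\cdot \,]$ by $\ex{\nu }{a}[R_{t}^{-2\nu }]$ identifies the tilted law as $\pr{-\nu }{a}(\,\cdot \,|\,\tau _{0}>t)$, which converges weakly to $\pr{\nu }{a}$; the relevant event is then $\{I_{\infty }>b\}$ with the threshold $b$ held fixed, a $\pr{\nu }{a}$-continuity set of probability $1-(b/a)^{2\nu }$ by \eqref{;idistr}, so neither a degenerating threshold nor an unbounded weight ever appears.
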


We begin with stating two facts in propositions. 

\begin{prop}\label{;absrel}
 It holds that for every $t>0$, 
 \begin{align}\label{;opposite}
  \pr{\nu }{a}\big| _{\calF _{t}}
  =\left( \frac{R_{t}}{a}\right) ^{2\nu }
  \pr{-\nu }{a}\big| _{\calF _{t}\cap \{ t<\tau _{0}\} }. 
 \end{align}
\end{prop}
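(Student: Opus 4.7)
The plan is to prove this as the classical absolute-continuity relation between Bessel processes of opposite indices, via It\^o's formula together with Girsanov's theorem and a localization at hitting times of small levels.

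To begin, under $\pr{-\nu}{a}$ the coordinate process $R$ obeys, up to $\tau_0$, the SDE $dR_t=dB_t+\tfrac{1-2\nu}{2R_t}\,dt$ with $R_0=a$, for some Brownian motion $B$. Since $f(x)=x^{2\nu}$ satisfies $\tfrac12 f''(x)+\tfrac{1-2\nu}{2x}f'(x)=0$, It\^o's formula yields $d(R_t^{2\nu})=2\nu R_t^{2\nu-1}\,dB_t$ on $[0,\tau_0)$. Setting $D_t:=(R_t/a)^{2\nu}$ we therefore have $dD_t/D_t=(2\nu/R_t)\,dB_t$, so that $D$ is a continuous local $\pr{-\nu}{a}$-martingale on $[0,\tau_0)$. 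Fixing $\ve\in(0,a)$, $N>a$, and writing $\sigma_\ve:=\inf\{s\ge 0:R_s\le\ve\}$ and $\tau^N:=\inf\{s\ge 0:R_s\ge N\}$, the stopped process $D_{\cdot\wedge\sigma_\ve\wedge\tau^N}$ is bounded and hence a genuine bounded martingale.

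Next I would apply Girsanov's theorem: under the equivalent probability $Q^{\ve,N}$ defined on $\calF_{t\wedge\sigma_\ve\wedge\tau^N}$ by $dQ^{\ve,N}=D_{t\wedge\sigma_\ve\wedge\tau^N}\,d\pr{-\nu}{a}$, the process $\tilde B_t:=B_t-\int_0^{t\wedge\sigma_\ve\wedge\tau^N}(2\nu/R_s)\,ds$ is a Brownian motion, and $R$ satisfies
\[
 dR_t=d\tilde B_t+\frac{1+2\nu}{2R_t}\,dt
\]
on $[0,\sigma_\ve\wedge\tau^N]$, which is the Bessel SDE of index $\nu$. Weak uniqueness on $(0,\infty)$ then forces $Q^{\ve,N}=\pr{\nu}{a}$ on $\calF_{t\wedge\sigma_\ve\wedge\tau^N}$.

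Finally I would let $N\uparrow\infty$ and $\ve\downarrow 0$. Under $\pr{\nu}{a}$ the origin is polar (since $\nu>0$), so $\sigma_\ve\uparrow\infty$ and $\tau^N\uparrow\infty$ a.s., and the events $\{t<\sigma_\ve\wedge\tau^N\}$ exhaust $\Om$ modulo $\pr{\nu}{a}$-null sets; on the $\pr{-\nu}{a}$ side the same events increase to $\{t<\tau_0\}$. Testing both sides of \eqref{;opposite} against an arbitrary nonnegative bounded $\calF_t$-measurable functional and passing to the limit by monotone convergence completes the proof. The main subtle point is the bookkeeping around $\tau_0$: under $\pr{-\nu}{a}$ the process does reach the origin, while under $\pr{\nu}{a}$ it does not, so the factor $(R_t/a)^{2\nu}$ on the right of \eqref{;opposite} must be read together with the restriction to $\{t<\tau_0\}$, and care is needed to see that this indicator arises naturally from the $\ve\downarrow 0$ limit rather than as an extraneous condition.
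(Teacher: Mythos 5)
Your proof is correct, but it takes a genuinely different route from the paper's. The paper establishes \eqref{;opposite} by transferring the statement to drifted Brownian motion: via Lamperti's relation $\exp \br{\mu }_{t}=\be{\mu }_{A_{t}(\br{\mu })}$ the Bessel process becomes a time-changed exponential of $\br{\mu }$, and the Cameron--Martin formula applied at the stopping time $\a _{t}$ produces the density $e^{2\nu (\br{-\nu }_{\a _{t}}-b)}=(R_{t}/a)^{2\nu }$; since the Cameron--Martin exponential for a constant drift is a genuine martingale, no localization is needed, and the indicator $\ind _{\{ t<\tau _{0}\} }$ falls out of the identification $\{ \a _{t}(\br{-\nu })<\infty \} =\{ \tau _{0}(\be{-\nu })>t\} $, which follows from $\tau _{0}=A_{\infty }$. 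You instead work intrinsically with the Bessel SDE: you check that $h(x)=x^{2\nu }$ is harmonic for the index $-\nu $ generator (a fact the paper records in the appendix but does not pursue), deduce that $D_{t}=(R_{t}/a)^{2\nu }$ is a local martingale under $\pr{-\nu }{a}$ on $[0,\tau _{0})$, apply Girsanov after stopping at $\sigma _{\ve }\wedge \tau ^{N}$ where $D$ is bounded, identify the tilted law with $\pr{\nu }{a}$ by weak uniqueness on $[\ve ,N]$, and recover the restriction to $\{ t<\tau _{0}\} $ in the monotone limit $\ve \downarrow 0$, $N\uparrow \infty $. Your computations all check out: the drift $\tfrac{1-2\nu }{2x}$ for index $-\nu $, the vanishing of the $dt$ term for $x^{2\nu }$, and the post-tilt drift $\tfrac{1+2\nu }{2x}$ are each correct, and your closing remark rightly identifies the one delicate point, namely that the indicator is forced by the localization rather than imposed by hand. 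What the paper's route buys is that every step reduces to classical facts about Brownian motion with constant drift, with an exact global density; what your route buys is that it exhibits \eqref{;opposite} explicitly as a Doob $h$-transform and generalizes immediately to any one-dimensional diffusion admitting a positive harmonic function.
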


\begin{prop}\label{;taboo}
Let $t>0$. Then for any $A\in \calF _{t}$, we have 
\begin{align*}
 \lim _{s\to \infty }\pr{-\nu }{a}(A\,|\,\tau _{0}>s)
 =\pr{\nu }{a}(A). 
\end{align*}
\end{prop}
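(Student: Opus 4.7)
The plan is to decompose the event $\{ \tau _{0}>s\} $ using the Markov property of $\pr{-\nu }{a}$ at time $t$, identify the numerator and denominator asymptotics by means of the explicit tail $\pr{-\nu }{x}(\tau _{0}>s)$, and finally transfer the resulting limit back to $\pr{\nu }{a}$ via \pref{;absrel}. Concretely, for $s>t$ and $A\in \calF _{t}$, the inclusion $\{ \tau _{0}>s\} \subset \{ \tau _{0}>t\} $, together with $\tau _{0}=t+\tau _{0}\circ \theta _{t}$ on $\{ \tau _{0}>t\} $ and the Markov property at time $t$, yields
\begin{align*}
 \pr{-\nu }{a}(A\cap \{ \tau _{0}>s\} )
 =\ex{-\nu }{a}\!\left[
 \ind _{A}\ind _{\{ t<\tau _{0}\} }
 \pr{-\nu }{R_{t}}(\tau _{0}>s-t)
 \right] .
\end{align*}

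For the hitting-time tail of a Bessel process with negative index, the classical formula expresses $\pr{-\nu }{x}(\tau _{0}>s)$ as an incomplete gamma integral; elementary bounds on this integral furnish both the pointwise convergence $s^{\nu }\pr{-\nu }{x}(\tau _{0}>s)\to x^{2\nu }/(2^{\nu }\Gamma (\nu +1))$ as $s\to \infty $ and the uniform-in-$s$ majorant $s^{\nu }\pr{-\nu }{x}(\tau _{0}>s)\le x^{2\nu }/(2^{\nu }\Gamma (\nu +1))$. Since \pref{;absrel} taken with $A=\Om $ gives $\ex{-\nu }{a}[R_{t}^{2\nu }\ind _{\{ t<\tau _{0}\} }]=a^{2\nu }$, this majorant is integrable, and dominated convergence applies to produce
\begin{align*}
 \lim _{s\to \infty }s^{\nu }\pr{-\nu }{a}(A\cap \{ \tau _{0}>s\} )
 =\frac{1}{2^{\nu }\Gamma (\nu +1)}
 \ex{-\nu }{a}\!\left[
 \ind _{A}R_{t}^{2\nu }\ind _{\{ t<\tau _{0}\} }
 \right] .
\end{align*}

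Specializing to $A=\Om $ identifies the denominator asymptotic $s^{\nu }\pr{-\nu }{a}(\tau _{0}>s)\to a^{2\nu }/(2^{\nu }\Gamma (\nu +1))$; taking the ratio of the two limits cancels the prefactor and leaves $\ex{-\nu }{a}[\ind _{A}(R_{t}/a)^{2\nu }\ind _{\{ t<\tau _{0}\} }]$, which is precisely $\pr{\nu }{a}(A)$ by another application of \pref{;absrel}. The principal obstacle is the uniform control on $s^{\nu }\pr{-\nu }{x}(\tau _{0}>s)$ required for dominated convergence; this in turn rests on the explicit hitting-time formula for Bessel processes with negative index, presumably among the auxiliary facts collected in the appendix of the paper.
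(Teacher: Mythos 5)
Your proof is correct, and it rests on the same skeleton as the paper's: decompose via the Markov property at time $t$, identify the tail asymptotics of $\tau_{0}$ from the incomplete-gamma representation (the paper records exactly the formula you need in \rref{;gamma}, restated as \eqref{;tail}), pass to the limit inside the expectation, and cancel the common normalization. Where you genuinely diverge is the device used to interchange limit and expectation. The paper first converts everything to the $+\nu$ side by \pref{;absrel}, writes the conditional probability as $\ex{\nu}{a}[M_{n};A]$ with $M_{n}=\ex{\nu}{R_{t}}[(R_{t_{n}-t})^{-2\nu}]\big/\ex{\nu}{a}[(R_{t_{n}})^{-2\nu}]$, notes that $M_{n}\to 1$ a.s.\ by \lref{;keylem} while $\ex{\nu}{a}[M_{n}]=1$, and invokes Scheff\'e's lemma, so that no domination is needed. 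You instead stay on the $-\nu$ side and supply the explicit uniform majorant $s^{\nu}\pr{-\nu}{x}(\tau_{0}>s)\le x^{2\nu}/(2^{\nu}\Gamma(\nu+1))$, integrable because $\ex{-\nu}{a}\bigl[R_{t}^{2\nu}\ind_{\{t<\tau_{0}\}}\bigr]=a^{2\nu}$, and apply dominated convergence. Both routes work: Scheff\'e is the softer tool and would survive in settings where no clean pointwise bound on the tail is available, while your argument is more hands-on and self-contained once the explicit formula is granted. One small point to tidy: your majorant is stated for $s^{\nu}\pr{-\nu}{x}(\tau_{0}>s)$ but is applied to $s^{\nu}\pr{-\nu}{R_{t}}(\tau_{0}>s-t)$, which costs an extra factor $(s/(s-t))^{\nu}$; this is bounded by $2^{\nu}$ for $s\ge 2t$, so nothing breaks, but it should be said.
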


These two facts seem well known; to make the paper self-contained, we 
provide their proofs in Appendix. We deduce from \pref{;absrel} the 
following lemma, which plays a key role throughout this section. 

\begin{lem}\label{;keylem}
 For every 
 $x>0$, it holds that as $t\to \infty $, 
 \begin{align}\label{;keyasympt}
  t^{\nu }\ex{\nu }{x}\left[ 
  \left( \frac{1}{R_{t}}\right) ^{2\nu }
  \right] 
  \to \frac{1}{2^{\nu }\Gamma (\nu +1)}. 
 \end{align}
 Here and below, $\ex{\nu }{x}$ denotes the expectation 
 with respect to $\pr{\nu }{x}$. 
\end{lem}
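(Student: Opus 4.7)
The plan is to apply \pref{;absrel} with the nonnegative $\calF_t$-measurable functional $F = R_t^{-2\nu}$, which converts the expectation into a tail probability of $\tau_0$ under the opposite-index Bessel law. Indeed, \eqref{;opposite} gives
\begin{align*}
 \ex{\nu}{x}\!\left[R_t^{-2\nu}\right] = \ex{-\nu}{x}\!\left[R_t^{-2\nu}\left(\frac{R_t}{x}\right)^{\!2\nu}\ind_{\{t<\tau_0\}}\right] = \frac{\pr{-\nu}{x}(\tau_0 > t)}{x^{2\nu}}.
\end{align*}
Hence \eqref{;keyasympt} becomes equivalent to $t^\nu \pr{-\nu}{x}(\tau_0 > t) \to x^{2\nu}/(2^\nu\Gamma(\nu+1))$ as $t\to\infty$, and by the scaling property of Bessel process this reduces at once to the case $x=1$.

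For $x=1$, I would invoke the classical fact that, under $\pr{-\nu}{1}$, the random variable $(2\tau_0)^{-1}$ has the $\Gamma(\nu,1)$ distribution; equivalently, $\tau_0$ admits the density $s\mapsto \frac{1}{2^\nu\Gamma(\nu)}s^{-\nu-1}e^{-1/(2s)}$ on $(0,\infty)$. (If one prefers not to cite this, it may be derived in a few lines from the Bessel transition density.) The substitution $s = tu$ then produces
\begin{align*}
 t^\nu\pr{-\nu}{1}(\tau_0 > t) = \frac{1}{2^\nu\Gamma(\nu)}\int_1^\infty u^{-\nu-1}e^{-1/(2tu)}\,du,
\end{align*}
and dominated convergence yields the limit $1/(2^\nu\Gamma(\nu+1))$.

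The main obstacle is merely cosmetic: citing or briefly deriving the density of $\tau_0$ under $\pr{-\nu}{1}$. Crucially, no uniform integrability estimate enters, because the reduction via \pref{;absrel} is an exact identity rather than an asymptotic comparison; this is what makes the argument short and transparent. It also explains, a posteriori, why the limit in \eqref{;keyasympt} is independent of $x$: the $x^{2\nu}$ arising from the scaling of $\pr{-\nu}{x}(\tau_0 > t)$ cancels exactly against the $x^{-2\nu}$ prefactor coming from \pref{;absrel}.
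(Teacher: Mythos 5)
Your proposal is correct and follows essentially the same route as the paper: the paper's proof likewise applies \pref{;absrel} to identify the expectation with $x^{-2\nu}\pr{-\nu}{x}(\tau_{0}>t)$ and then invokes the gamma-type representation of the tail of $\tau_{0}$ (recorded in \rref{;gamma} via Dufresne's identity), from which the limit follows at once. Your extra scaling reduction to $x=1$ is harmless but unnecessary, since the representation already holds for general $x$.
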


\begin{proof}
 By the absolute continuity relation \pref{;absrel}, 
 the expectation on the left-hand side of \eqref{;keyasympt} 
 is equal to $x^{-2\nu }\pr{-\nu }{x}(\tau _{0}>t)$, and hence 
 admits the representation 
 \begin{align*}
  \ex{\nu }{x}\left[ 
  \left( \frac{1}{R_{t}}\right) ^{2\nu }
  \right] 
  =\frac{1}{2^{\nu }\Gamma (\nu )}\int _{t}^\infty 
  \frac{ds}{s^{\nu +1}}\exp \left( -\frac{x^2}{2s}\right) 
 \end{align*}
 (see \rref{;gamma} in Appendix). 
 The assertion readily follows from this identity. 
\end{proof}
One may also prove the lemma by using the explicit 
representation for the transition densities of Bessel process. 

For each $t\ge 0$, we set 
\begin{align*}
 I_{t}\equiv I_{t}(R):=\inf _{0\le s\le t}R_{s}. 
\end{align*}
We also write $I_{\infty }$ for $\inf \limits_{t\ge 0}R_{t}$. Recall that 
for every $x>0$ and $0\le y\le x$, 
\begin{align}\label{;idistr}
 \pr{\nu }{x}\left( I_{\infty }>y\right) 
 =1-\left( \frac{y}{x}\right) ^{2\nu }. 
\end{align}
As in \cite{hm}, we also use the following identity: 
\begin{lem}\label{;keyiden}
For every $b\in [0,a)$ and $t>0$, it holds that 
\begin{align}\label{;iden}
 \pr{\nu }{a}(I_{t}>b)
 =1-\left( \frac{b}{a}\right) ^{2\nu }
 +\ex{\nu }{a}\left[ 
 \left( \frac{b}{R_{t}}\right) ^{2\nu }; 
 I_{t}>b
 \right] . 
\end{align}
\end{lem}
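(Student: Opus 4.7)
The plan is to decompose $\{I_{t}>b\}$ according to whether the global infimum $I_{\infty }$ stays above $b$ or not, and then apply the Markov property at time $t$ together with the explicit hitting-probability formula \eqref{;idistr}. Since $\{I_{\infty }>b\}\subseteq \{I_{t}>b\}$, writing
\[
 \pr{\nu }{a}(I_{t}>b)
 =\pr{\nu }{a}(I_{\infty }>b)
 +\pr{\nu }{a}(I_{t}>b,\,I_{\infty }\le b),
\]
and recalling from \eqref{;idistr} that $\pr{\nu }{a}(I_{\infty }>b)=1-(b/a)^{2\nu }$, it suffices to identify the second term on the right with the expectation $\ex{\nu }{a}[(b/R_{t})^{2\nu };I_{t}>b]$.

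For that, I would condition on $\calF _{t}$ and use the simple Markov property. On the event $\{I_{t}>b\}$ we have $R_{t}\ge I_{t}>b$, so by \eqref{;idistr} applied with starting point $R_{t}$ and level $b$,
\[
 \pr{\nu }{a}\left( I_{\infty }\le b\,\big|\,\calF _{t}\right)
 \ind _{\{I_{t}>b\}}
 =\pr{\nu }{R_{t}}\!\left( \inf _{s\ge 0}R_{s}\le b\right)
 \ind _{\{I_{t}>b\}}
 =\left( \frac{b}{R_{t}}\right) ^{2\nu }\ind _{\{I_{t}>b\}}.
\]
Taking expectations and observing that $\{I_{t}>b,\,I_{\infty }\le b\}
=\{I_{t}>b\}\cap \{I_{\infty }\le b\}$, we obtain
\[
 \pr{\nu }{a}(I_{t}>b,\,I_{\infty }\le b)
 =\ex{\nu }{a}\!\left[
 \left( \frac{b}{R_{t}}\right) ^{2\nu };I_{t}>b
 \right] ,
\]
which combined with the previous decomposition gives \eqref{;iden}.

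Since the argument only combines the Markov property with a known hitting-probability formula that has already been recorded as \eqref{;idistr}, there is no real obstacle; the only point one should verify carefully is the strict inequality $R_{t}>b$ on $\{I_{t}>b\}$, which ensures that the formula from \eqref{;idistr} applies with the ``starting point'' $R_{t}$ lying strictly above the level $b$.
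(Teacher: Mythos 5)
Your argument is correct and is essentially the paper's own proof: both rest on the Markov property at time $t$ combined with \eqref{;idistr}, the paper computing $\pr{\nu }{a}(I_{\infty }>b\,|\,\calF _{t})=\ind _{\{I_{t}>b\}}\{1-(b/R_{t})^{2\nu }\}$ and taking expectations, while you equivalently split off $\pr{\nu }{a}(I_{t}>b,\,I_{\infty }\le b)$ first and then condition. (A minor remark: strictness of $R_{t}>b$ is not actually needed, since \eqref{;idistr} is valid for $0\le y\le x$.)
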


\begin{proof}
 By the Markov property of Bessel process and \eqref{;idistr}, 
 \begin{align*}
  \pr{\nu }{a}(I_{\infty }>b\,|\,\calF _{t})&=
  \pr{\nu }{x}(y\wedge I_{\infty }>b)\big|_{(x,y)=(R_{t},I_{t})}\\
  &=\ind _{\{ I_{t}>b\} }
  \left\{ 1-\left( \frac{b}{R_{t}}\right) ^{2\nu }\right\} 
 \end{align*}
 $\pr{\nu }{a}$-a.s. 
 Taking the expectation on both sides leads to \eqref{;iden}. 
\end{proof}

Since $\pr{\nu }{a}(\tau _{b}>t)=\pr{\nu }{a}(I_{t}>b)$ and 
$\pr{\nu }{a}(\tau _{b}=\infty )=\pr{\nu }{a}(I_\infty >b)
=1-(b/a)^{2\nu }$ by \eqref{;idistr}, we have from \eqref{;iden} 
\begin{align}\label{;iden2}
 \pr{\nu }{a}(\infty >\tau _{b}>t)=
 \ex{\nu }{a}\left[ 
 \left( \frac{b}{R_{t}}\right) ^{2\nu }; 
 I_{t}>b
 \right] . 
\end{align}
We are ready to prove \tref{;th1}. 

\def\tpn {\tilde{P}_{n}}
\begin{proof}[Proof of \tref{;th1}]
\thetag{i} Fix arbitrarily a strictly increasing sequence 
$\{ t_{n}\} _{n\in \N }\subset (0,\infty )$ such that 
$\lim \limits_{n\to \infty }t_{n}=\infty $. For each $n$, 
we define the probability measure $\tpn $ on $\Om $ by 
\begin{align*}
 \tpn (A):=
 \frac{
 \ex{\nu }{a}\left[ (R_{t_{n}})^{-2\nu };R^{t_{n}}_{\cdot }\in A\right] 
 }{\ex{\nu }{a}[(R_{t_{n}})^{-2\nu }]}, 
 \quad A\in \calF , 
\end{align*}
where $R^{t_{n}}_{t}:=R_{t\wedge t_{n}},\,t\ge 0$. 

First we show that 
$\{ \tpn \} _{n\in \N }$ is tight. Fix $t>0$ and take $A'\in \calF _{t}$. 
If we let $n$ be such that $t_{n}\ge t$, then by \psref{;absrel} and 
\ref{;taboo}, 
\begin{align}
 \tpn (A')&=\pr{-\nu }{a}(A'\,|\,\tau _{0}>t_{n}) \notag \\
 &\xrightarrow[n\to \infty ]{}\pr{\nu }{a}(A'). \label{;preq1}
\end{align}
This convergence for any $A'\in \calF _{t}$ entails in particular that 
by regarding each $\tpn $ as being defined 
on the path space $\Om _{t}=C([0,t];\R )$ equipped with the uniform norm 
topology, $\{ \tpn \} _{n\in \N }$ is tight as 
a sequence of probability measures on $\Om _{t}$, which is equivalent to 
\begin{align}\label{;equicont}
 \lim _{\delta \downarrow 0}\sup _{n\in \N }
 \tpn \Bigl( \omega \in \Om _{t};\,
 \sup _{
 \begin{subarray}{c}
 |u-v|\le \delta \\
 0\le u,v\le t
 \end{subarray}
 }
 |\omega (u)-\omega (v)|>\ve 
 \Bigr) =0
\end{align}
for any $\ve >0$ (see, e.g., \cite[Theorem~2.4.10]{ks}). 
It is then clear that, with $\Om _{t}$ replaced by $\Om $, 
\eqref{;equicont} holds for any $t>0$ and $\ve >0$, 
and hence the tightness of 
$\{ \tpn \} _{n\in \N }$ follows. 

As $t>0$ is arbitrary, the convergence $\eqref{;preq1}$ 
also implies that $\{ \tpn \} _{n\in \N }$ converges to $\pr{\nu }{a}$ 
in the sense of finite-dimensional distributions. Consequently, 
$\{ \tpn \} _{n\in \N }$ converges weakly to $\pr{\nu }{a}$. 
Since $\pr{\nu }{a}(I_{\infty }=b)=0$ by \eqref{;idistr}, 
the weak convergence entails that 
\begin{align}\label{;conv}
 \tpn (I_{\infty }>b)\xrightarrow[n\to \infty ]{}
 \pr{\nu }{a}(I_{\infty }>b). 
\end{align}
By the definition of $\tpn $, the left-hand side of \eqref{;conv} 
is equal to 
\begin{align*}
  \frac{\ex{\nu }{a}[(R_{t_{n}})^{-2\nu };I_{t_{n}}>b]}
 {\ex{\nu }{a}[(R_{t_{n}})^{-2\nu }]}. 
\end{align*}
As the sequence $\{ t_{n}\} _{n\in \N }$ is arbitrarily taken, 
we now conclude that 
\begin{align}\label{;preq2}
 \lim _{t\to \infty } \frac{\ex{\nu }{a}[(R_{t})^{-2\nu };I_{t}>b]}
 {\ex{\nu }{a}[(R_{t})^{-2\nu }]}
 =\pr{\nu }{a}(I_{\infty }>b), 
\end{align}
which proves \thetag{i} by \lref{;keylem}, \eqref{;idistr} and 
\eqref{;iden2}. \\
\thetag{ii} By \pref{;absrel} we have 
\begin{align}\label{;preq3}
 \pr{-\nu }{a}(\tau _{b}>t)\equiv 
 \pr{-\nu }{a}(I_{t}>b)
 =\ex{\nu }{a}\left[ \left( \frac{a}{R_{t}}\right) ^{2\nu }; 
 I_{t}>b\right] . 
\end{align}
The assertion follows from this and \eqref{;preq2}. 
\end{proof}

The same reasoning as the proof of \tref{;th1}\,\thetag{i} also yields the 
\begin{prop}\label{;keyprop}
 For any continuous function $f:\R \to \R $, we have 
 \begin{align*}
  \lim _{t\to \infty }t^{\nu }
  \ex{\nu }{a}\left[ 
  f(I_{t})(R_{t})^{-2\nu }
  \right] 
  =\frac{2\nu }{2^{\nu }a^{2\nu }\Gamma (\nu +1)}
  \int _{0}^{a}z^{2\nu -1}f(z)\,dz. 
 \end{align*}
\end{prop}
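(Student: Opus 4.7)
The plan is to repeat the weak-convergence argument in the proof of \tref{;th1}\,\thetag{i}, with the indicator $\ind _{\{ I_{\infty }>b\} }$ replaced by $f(I_{\infty })$. Fix an arbitrary strictly increasing sequence $\{ t_{n}\} _{n\in \N }\subset (0,\infty )$ with $t_{n}\to \infty $ and define the probability measures $\tilde{P}_{n}$ on $\Om $ exactly as in that proof; in particular $\tilde{P}_{n}\to \pr{\nu }{a}$ weakly on $\Om $. Because the coordinate process is stopped at $t_{n}$ under $\tilde{P}_{n}$, one has $I_{\infty }(R)=I_{t_{n}}(R)$, $\tilde{P}_{n}$-a.s., and hence, writing $\tilde{E}_{n}$ for expectation under $\tilde{P}_{n}$,
\begin{align*}
\tilde{E}_{n}[f(I_{\infty })]=\frac{\ex{\nu }{a}\left[ f(I_{t_{n}})(R_{t_{n}})^{-2\nu }\right] }{\ex{\nu }{a}\left[ (R_{t_{n}})^{-2\nu }\right] }.
\end{align*}

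The principal step is the convergence $\tilde{E}_{n}[f(I_{\infty })]\to \ex{\nu }{a}[f(I_{\infty })]$; this is the main obstacle since $\omega \mapsto I_{\infty }(\omega )$ is not continuous on $\Om $ with the compact-uniform topology. In parallel with the argument in the proof of \tref{;th1}\,\thetag{i} for the indicator case, I would dispose of it by passing to the one-dimensional laws on $[0,a]$: by \eqref{;iden2}, \lref{;keylem} and the computation in \tref{;th1}\,\thetag{i},
\begin{align*}
\tilde{P}_{n}(I_{\infty }>b)=\frac{\ex{\nu }{a}\left[ (R_{t_{n}})^{-2\nu };I_{t_{n}}>b\right] }{\ex{\nu }{a}\left[ (R_{t_{n}})^{-2\nu }\right] }\longrightarrow 1-(b/a)^{2\nu }=\pr{\nu }{a}(I_{\infty }>b)
\end{align*}
for every $b\in [0,a)$. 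Since the limit is a continuous distribution function on $[0,a]$ and $I_{\infty }\in [0,a]$ under both measures, the laws of $I_{\infty }$ under $\tilde{P}_{n}$ converge weakly on $[0,a]$ to the law of $I_{\infty }$ under $\pr{\nu }{a}$; the function $f$, being bounded and continuous on the compact $[0,a]$, then integrates through.

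Given this, applying \lref{;keylem} (with $x=a$) to the denominator yields
\begin{align*}
t_{n}^{\nu }\ex{\nu }{a}\left[ f(I_{t_{n}})(R_{t_{n}})^{-2\nu }\right] \longrightarrow \frac{1}{2^{\nu }\Gamma (\nu +1)}\,\ex{\nu }{a}[f(I_{\infty })].
\end{align*}
By \eqref{;idistr}, $I_{\infty }$ under $\pr{\nu }{a}$ has density $2\nu z^{2\nu -1}/a^{2\nu }$ on $[0,a]$, so $\ex{\nu }{a}[f(I_{\infty })]=(2\nu /a^{2\nu })\int _{0}^{a}z^{2\nu -1}f(z)\,dz$; the claimed limit then holds along the arbitrary sequence $\{ t_{n}\} $, hence as $t\to \infty $.
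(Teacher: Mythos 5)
Your proof is correct, and its skeleton is the same as the paper's: both arguments run the weak-convergence machinery from the proof of \tref{;th1}\,\thetag{i} with the tilted measures $\tilde{P}_{n}$, identify $\int f(I_{\infty })\,d\tilde{P}_{n}$ with the ratio $\ex{\nu }{a}[f(I_{t_{n}})(R_{t_{n}})^{-2\nu }]/\ex{\nu }{a}[(R_{t_{n}})^{-2\nu }]$, and finish with \lref{;keylem} and the explicit law \eqref{;idistr} of $I_{\infty }$. Where you diverge is the one genuinely delicate step, namely passing from $\tilde{P}_{n}\Rightarrow \pr{\nu }{a}$ on $\Om $ to $\int f(I_{\infty })\,d\tilde{P}_{n}\to \ex{\nu }{a}[f(I_{\infty })]$. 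The paper does this by replacing $I_{\infty }$ with the truncated functional $I^{a,+}(\omega )=\inf _{t\ge 0}\{ a\wedge (\omega (t)\vee 0)\} $, which it declares bounded and continuous on $\Om $ and feeds directly into the definition of weak convergence. You instead drop down to the one-dimensional marginals: the convergence $\tilde{P}_{n}(I_{\infty }>b)\to 1-(b/a)^{2\nu }$ for every $b\in [0,a)$, already secured in \eqref{;conv}, is pointwise convergence of distribution functions to a continuous limit, hence weak convergence of the laws of $I_{\infty }$ on the compact set $[0,a]$, against which the bounded continuous function $f|_{[0,a]}$ integrates. Your detour is a few lines longer but buys something real: it never requires continuity of an infimum-over-all-time functional on path space. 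In the compact-uniform topology such a functional is only upper semicontinuous (a path bounded away from zero is the compact-uniform limit of paths dipping to zero at ever later times), so the paper's one-line continuity claim for $I^{a,+}$ is the fragile point of its proof, whereas your reduction to distribution functions rests solely on facts already established in \sref{;results}. The remaining steps---the identity $I_{\infty }=I_{t_{n}}$ under $\tilde{P}_{n}$, the normalization via \lref{;keylem}, and the computation $\ex{\nu }{a}[f(I_{\infty })]=(2\nu /a^{2\nu })\int _{0}^{a}z^{2\nu -1}f(z)\,dz$---agree with the paper's.
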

\begin{proof}
 We keep the notation in the proof of \tref{;th1}\,\thetag{i}. Note that 
 the mapping 
 \begin{align*}
  \Om \ni \omega \mapsto \inf _{t\ge 0}\left\{ 
  a\wedge (\omega (t)\vee 0)
  \right\} =:I^{a,+}(\omega )
 \end{align*}
 is bounded and continuous, and that $I_{\infty }=I^{a,+}(R)$ 
 $\pr{\nu }{a}$-a.s. As $\{ \tpn \} _{n\in \N }$ converges 
 weakly to $\pr{\nu }{a}$, we have for any continuous function $f$ on $\R $, 
 \begin{align*}
  \int _{\Om }f\left( I^{a,+}(R)\right) d\tpn 
  \xrightarrow[n\to \infty ]{}\ex{\nu }{a}\left[ f(I_{\infty })\right] . 
 \end{align*}
 By the definition of $\tpn $, the left-hand side is equal to 
 \begin{align*}
  \frac{\ex{\nu }{a}\left[ f(I_{t_{n}})(R_{t_{n}})^{-2\nu }\right] }
  {\ex{\nu }{a}\left[ (R_{t_{n}})^{-2\nu }\right] }. 
 \end{align*}
 The rest of the proof proceeds in the same way as the proof of 
 \tref{;th1}\,\thetag{i}. 
\end{proof}

As an application of this proposition, we may prove further the following 
asymptotic formulae: We set 
\begin{align*}
 \ri :=\inf \{ t\ge 0;\,R_{t}=I_{\infty }\} ; 
\end{align*}
as we will see in \pref{;unique} below, $\ri $ is a.s.\ the unique 
time at which the global infimum $I_{\infty }$ is attained. 

\begin{thm}\label{;tcor}
 \thetag{i}~For any $0\le b\le a$, it holds that 
 \begin{align}\label{;tcor1}
  \lim _{t\to \infty }t^{\nu }
  \pr{\nu }{a}\left( I_{t}-I_{\infty }>b\right) 
  =\frac{2\nu }{2^{\nu }a^{2\nu }\Gamma (\nu +1)}
  \int _{b}^{a}z^{2\nu -1}(z-b)^{2\nu }\,dz. 
 \end{align}
 In particular, 
 \begin{align}\label{;tcor2}
  \lim _{t\to \infty }t^{\nu }\pr{\nu }{a}(\ri >t)
  =\frac{a^{2\nu }}{2^{\nu +1}\Gamma (\nu +1)}. 
 \end{align}
 \thetag{ii}~For any continuous function $g:\R \to \R $, it holds that 
 \begin{align*}
  \lim _{t\to \infty }t^{\nu }\left\{ 
  \ex{\nu }{a}[g(I_{\infty })]-\ex{\nu }{a}[g(I_{t})]
  \right\} 
  =\frac{2\nu }{2^{\nu }a^{2\nu }\Gamma (\nu +1)}
  \int _{0}^{a}(a^{2\nu }-2z^{2\nu })z^{2\nu -1}g(z)\,dz. 
 \end{align*}
\end{thm}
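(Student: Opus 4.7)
The plan for both parts is to decompose the global infimum at time $t$ via the Markov property and then feed a carefully chosen continuous function into \pref{;keyprop}. Set $J_{t}:=\inf _{s\ge t}R_{s}$, so that $I_{\infty }=I_{t}\wedge J_{t}$; the Markov property at time $t$ and the distribution formula \eqref{;idistr} yield, $\pr{\nu }{a}$-a.s.,
\begin{align*}
 \pr{\nu }{a}\left( J_{t}<c\,\big|\,\calF _{t}\right)
 =\left( \frac{c\wedge R_{t}}{R_{t}}\right) ^{2\nu }
 \quad \text{for all }c\ge 0.
\end{align*}

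For \thetag{i}, since $I_{t}\le R_{t}$ always, the event $\{ I_{t}-I_{\infty }>b\} $ coincides with $\{ J_{t}<(I_{t}-b)\vee 0\} $, so the display above specialises to
\begin{align*}
 \pr{\nu }{a}(I_{t}-I_{\infty }>b\,|\,\calF _{t})
 =\frac{\{ (I_{t}-b)\vee 0\} ^{2\nu }}{(R_{t})^{2\nu }}.
\end{align*}
Taking expectations and applying \pref{;keyprop} with the continuous function $f_{b}(z):=\{ (z-b)\vee 0\} ^{2\nu }$ gives \eqref{;tcor1} directly. For \eqref{;tcor2}, specialise to $b=0$ and invoke \pref{;unique} to identify $\{ \ri >t\} $ with $\{ I_{t}>I_{\infty }\} $ up to a $\pr{\nu }{a}$-null set; the resulting integral $\int _{0}^{a}z^{4\nu -1}\,dz=a^{4\nu }/(4\nu )$ then combines with the prefactor to produce the announced constant $a^{2\nu }/\{ 2^{\nu +1}\Gamma (\nu +1)\} $.

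For \thetag{ii}, the decomposition $g(I_{\infty })-g(I_{t})=(g(J_{t})-g(I_{t}))\ind _{\{ J_{t}<I_{t}\} }$ together with the density $2\nu y^{2\nu -1}/x^{2\nu }$ on $(0,x)$ of $I_{\infty }$ under $\pr{\nu }{x}$ (obtained by differentiating \eqref{;idistr}) gives, after a short computation,
\begin{align*}
 \ex{\nu }{a}\left[ g(I_{\infty })-g(I_{t})\,|\,\calF _{t}\right]
 =\frac{h(I_{t})}{(R_{t})^{2\nu }},\quad
 h(z):=2\nu \int _{0}^{z}g(y)y^{2\nu -1}\,dy-g(z)z^{2\nu }.
\end{align*}
Since $h$ is continuous, \pref{;keyprop} applies and reduces the claim to evaluating $\int _{0}^{a}z^{2\nu -1}h(z)\,dz$. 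A Fubini exchange in the ensuing double integral, combined with the remaining single term, collapses the expression to $\int _{0}^{a}(a^{2\nu }-2y^{2\nu })y^{2\nu -1}g(y)\,dy$, matching the right-hand side of the claim.

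I do not foresee a substantive obstacle. The conditional probabilities come straight from the Markov property and \eqref{;idistr}, and the continuity of $f_{b}$ and $h$ is immediate (for $h$, from continuity of $g$ and of its weighted antiderivative). The only point requiring external input is the identification $\{ \ri >t\} =\{ I_{t}>I_{\infty }\} $ used in deriving \eqref{;tcor2}, which is handled by \pref{;unique}.
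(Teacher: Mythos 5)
Your proposal is correct and follows essentially the same route as the paper: both parts condition on $\calF_{t}$ via the Markov property and \eqref{;idistr} to express the quantity of interest as $\ex{\nu }{a}[f(I_{t})(R_{t})^{-2\nu }]$ with $f(z)=\{ (z-b)\vee 0\} ^{2\nu }$ for \thetag{i} and $f=h$ for \thetag{ii}, then invoke \pref{;keyprop} and \pref{;unique}. The only difference is cosmetic (you make the post-$t$ infimum $J_{t}$ and the Fubini computation explicit, where the paper states them more compactly).
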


\begin{proof}
 \thetag{i} By the Markov property of Bessel process and by \eqref{;idistr}, 
 \begin{align}
  \pr{\nu }{a}(I_{t}-I_{\infty }>b)
  &=\ex{\nu }{a}\left[ 
  \pr{\nu }{x}(y-y\wedge I_{\infty }>b)\big| _{(x,y)=(R_{t},I_{t})}
  \right] \notag \\
  &=\ex{\nu }{a}\left[ 
  \frac{(I_{t}-b)^{2\nu }}{(R_{t})^{2\nu }};I_{t}>b
  \right] . \label{;cond1}
 \end{align}
 Taking $f(z)=\{ (z-b)\vee 0\} ^{2\nu }$ in \pref{;keyprop} 
 leads to \eqref{;tcor1}. The latter equality \eqref{;tcor2} follows by 
 taking $b=0$ in \eqref{;tcor1}; indeed, as seen in the proof 
 of \pref{;unique}, one has 
 $\pr{\nu }{a}(\ri >t)=\pr{\nu }{a}(I_{t}>I_{\infty })$. \\
 \thetag{ii} Again by the Markov property, 
 \begin{align*}
  \ex{\nu }{a}\left[ g(I_{\infty })| \calF _{t}\right] 
  =\ex{\nu }{x}\left[ g(y\wedge I_{\infty })\right] \!
  \big| _{(x,y)=(R_{t},I_{t})} \quad \text{$\pr{\nu }{a}$-a.s.}
 \end{align*}
 for every $t>0$. By \eqref{;idistr}, the $\pr{\nu }{x}$-expectation 
 on the right-hand side is calculated as 
 \begin{align*}
  g(y)+\frac{h(y)}{x^{2\nu }}, \quad 
  h(y):=2\nu \int _{0}^{y}z^{2\nu -1}g(z)\,dz-y^{2\nu }g(y). 
 \end{align*}
 Hence we have 
 \begin{align*}
  \ex{\nu }{a}[g(I_{\infty })]-\ex{\nu }{a}[g(I_{t})]
  =\ex{\nu }{a}\left[ 
  \frac{h(I_{t})}{(R_{t})^{2\nu }}
  \right] . 
 \end{align*}
 Taking $f=h$ in \pref{;keyprop} concludes the proof. 
\end{proof}

We give a remark on \tref{;tcor}\,\thetag{ii}. 
\begin{rem}
\thetag{1}~We may allow the function $g$ to have the set of 
discontinuity with Lebesgue measure $0$; in particular, taking 
$g=\ind _{(b,\infty )}$ recovers \tref{;th1}\,\thetag{i}. \\
\thetag{2}~For the function $h$ defined in the proof, the process 
 \begin{align*}
  g(I_{t})+\frac{h(I_{t})}{(R_{t})^{2\nu }}, \quad t\ge 0, 
 \end{align*}
 is, by definition, an $\{ \calF _{t}\} $-martingale 
 under $\pr{\nu }{a}$, which may be associated with the so-called 
 {\it Az\'ema-Yor martingales} (see \cite{ay}); 
 in fact, $\{ (R_{t})^{-2\nu };t\ge 0\} $ is an 
 $\{ \calF _{t}\} $-local martingale and 
 $\sup \limits_{0\le s\le t}(R_{s})^{-2\nu }=(I_{t})^{-2\nu }$. 
\end{rem}

We may also relate \eqref{;tcor2} to \tref{;th1}\,\thetag{ii} in the 
following manner: 

\begin{proof}[Proof of \eqref{;tcor2} via \tref{;th1}\,\thetag{ii}] 
 Note that by taking $b=0$ in \eqref{;cond1}, 
 \begin{align*}
  \pr{\nu }{a}(\ri >t)&=\pr{\nu }{a}(I_{t}>I_{\infty })\\
  &=\ex{\nu }{a}\left[ 
  \left( \frac{I_{t}}{R_{t}}\right) ^{2\nu }
  \right] . 
 \end{align*}
 By the absolute continuity relation \pref{;absrel} and 
 by Fubini's theorem, this is rewritten as 
 \begin{align*}
  \ex{-\nu }{a}\left[ \left( \frac{I_{t}}{a}\right) ^{2\nu }; 
  t<\tau _{0}\right] 
  &=\frac{2\nu }{a^{2\nu }}\int _{0}^{a}z^{2\nu -1}
  \pr{-\nu }{a}(I_{t}>z)\,dz\\
  &=\frac{2\nu }{a^{2\nu }}\int _{0}^{a}z^{2\nu -1}
  \pr{-\nu }{a}(\tau _{z}>t)\,dz. 
 \end{align*}
 For every $z\in (0,a)$ we have by \tref{;th1}\,\thetag{ii} and 
 \eqref{;preq3}, 
 \begin{align*}
  a^{2\nu }\ge \frac{\pr{-\nu }{a}(\tau _{z}>t)}
  {\ex{\nu }{a}[(R_{t})^{-2\nu }]}
  \xrightarrow[t\to \infty ]{}a^{2\nu }\left\{ 
  1-\left( \frac{z}{a}\right) ^{2\nu }
  \right\} , 
 \end{align*}
 and hence the bounded convergence theorem yields 
 \begin{align*}
  \lim _{t\to \infty }
  \frac{\pr{\nu }{a}(\ri >t)}
  {\ex{\nu }{a}[(R_{t})^{-2\nu }]}
  &=2\nu \int _{0}^{a}z^{2\nu -1}\left\{ 
  1-\left( \frac{z}{a}\right) ^{2\nu }
  \right\} dz\\
  &=\frac{1}{2}a^{2\nu }. 
 \end{align*}
 This shows \eqref{;tcor2} by \lref{;keylem}. 
\end{proof}
 
We conclude this section with a remark on the above proof. 
\begin{rem}\label{;decomp}
 In the proof we have just observed the identity 
 \begin{align}\label{;disint}
  \pr{\nu }{a}(\ri >t)=\frac{2\nu }{a^{2\nu }}
  \int _{0}^{a}z^{2\nu -1}\pr{-\nu }{a}(\tau _{z}>t)\,dz , 
 \end{align}
 which may easily be extended, thanks to the Markov property, to 
 \begin{align*}
  \pr{\nu }{a}\left( A\cap \{ \ri >t\} \right) =\frac{2\nu }{a^{2\nu }}
  \int _{0}^{a}z^{2\nu -1}
  \pr{-\nu }{a}\left( A\cap \{ \tau _{z}>t\} \right) dz 
 \end{align*}
 for any $A\in \calF _{t}$. This relation shows that 
 the process $\{ R_{t};0\le t\le \ri \} $ under $\pr{\nu }{a}$ 
 is identical in law with $\{ \xi _{t};0\le t\le \tau _{Z}(\xi )\} $, 
 where $\xi $ is a Bessel process with index $-\nu $ starting from $a$ 
 and $Z$ is a random variable independent of $\xi $ and distributed as 
 $(2\nu /a^{2\nu })z^{2\nu -1}\,dz,\,z\in (0,a)$. 
 In the case $\nu =1/2$, this partly recovers the path decomposition 
 of $3$-dimensional Bessel process due to D.~Williams 
 (e.g., \cite[Theorem~VI.3.11]{rey}). We also refer the reader to 
 \cite[Corollary~4.14]{fi} for identities as \eqref{;disint} 
 in a general framework of diffusion processes. 
\end{rem}

\ND\cnu{C_{\nu}}
\section{Asymptotic estimates for remainders}\label{;improved}
Independently of the argument used in the previous section, 
we prove in this section the next two theorems, which give 
sharp asymptotics for remainders in \eqref{;asym1} and \eqref{;asym2}. 
In the sequel we fix $0\le b<a$ and set 
\begin{align*}
 \cnu :=\frac{a^{2\nu }-b^{2\nu }}{2^{\nu }\Gamma (\nu +1)} 
\end{align*}
for every positive $\nu $. When $\nu <1$, we also set 
\begin{align}\label{;kpn}
 \kappa _{\nu }
 :=\int _{1}^{\infty }\frac{(v+1)^{2\nu }-v^{2\nu }}{v^{\nu +1}}\,dv 
 \in (0,\infty ). 
\end{align}

\begin{thm}\label{;improve}
It holds that 
\begin{align*}
 \thetag{i}&~\text{for $\nu <1$}, \quad \lim _{t\to \infty }t^{2\nu }
 \left( 
 \pr{\nu }{a}(\infty >\tau _{b}>t)-
 \frac{b^{2\nu }}{a^{2\nu }}\frac{\cnu }{t^{\nu }}
 \right) 
 =\frac{b^{4\nu }}{(2a^{2})^{\nu }\Gamma (\nu +1)}\cnu 
 \left( 
 1-\nu \kappa _{\nu }
 \right) ; \\
 \thetag{ii}&~\text{for $\nu =1$}, \quad 
 \lim _{t\to \infty }\frac{t^2}{\log t}
 \left( 
 \pr{1}{a}(\infty >\tau _{b}>t)-
 \frac{b^2}{a^2}\frac{C_{1}}{t}
 \right)  =-\frac{b^{4}}{a^2}C_{1}; \\
 \thetag{iii}&~\text{for $\nu >1$}, \\
 &\hspace{5.4em}-\infty <\liminf _{t\to \infty }t^{\nu +1}
 \left( 
 \pr{\nu }{a}(\infty >\tau _{b}>t)-
 \frac{b^{2\nu }}{a^{2\nu }}\frac{\cnu }{t^{\nu }}
 \right) \\
 &\hspace{7.6em}\le \limsup _{t\to \infty }t^{\nu +1}
 \left( 
 \pr{\nu }{a}(\infty >\tau _{b}>t)-
 \frac{b^{2\nu }}{a^{2\nu }}\frac{\cnu }{t^{\nu }}
 \right) <0. 
\end{align*}
\end{thm}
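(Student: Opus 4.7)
The plan is to combine~\eqref{;iden2} with the strong Markov property of Bessel process at $\tau_b$, which I expect to be the content of~\lref{;hitting}. Writing $\phi_x(u):=\ex{\nu}{x}[R_u^{-2\nu}]$ and decomposing $\ind_{\{I_t>b\}}=1-\ind_{\{\tau_b\le t\}}$, \eqref{;iden2} becomes
\begin{equation*}
 \pr{\nu}{a}(\infty>\tau_b>t)=b^{2\nu}\phi_a(t)-b^{2\nu}\ex{\nu}{a}[\phi_b(t-\tau_b);\,\tau_b\le t],
\end{equation*}
the second term arising by applying the strong Markov property at $\tau_b$. Both $\phi_a$ and $\phi_b$ admit the representation $\phi_x(u)=(2^\nu\Gamma(\nu))^{-1}\int_u^\infty s^{-\nu-1}e^{-x^2/(2s)}\,ds$ from~\rref{;gamma}, and Taylor expansion of the exponential gives
\begin{equation*}
 \phi_x(u)=\frac{1}{2^\nu\Gamma(\nu+1)u^\nu}-\frac{\nu x^2}{2^{\nu+1}(\nu+1)\Gamma(\nu+1)u^{\nu+1}}+O(u^{-\nu-2}),
\end{equation*}
so $b^{2\nu}\phi_a(t)$ supplies the leading term $b^{2\nu}/(2^\nu\Gamma(\nu+1)t^\nu)$ with a correction of order $t^{-\nu-1}$.

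The heart of the proof is the expectation term. Substituting the integral form of $\phi_b$ and applying Fubini yields
\begin{equation*}
 \ex{\nu}{a}[\phi_b(t-\tau_b)-\phi_b(t);\tau_b\le t]=\frac{1}{2^\nu\Gamma(\nu)}\int_0^t\frac{e^{-b^2/(2s)}}{s^{\nu+1}}\pr{\nu}{a}(\tau_b\in[t-s,t])\,ds,
\end{equation*}
with $\pr{\nu}{a}(\tau_b\in[t-s,t])=\pr{\nu}{a}(\infty>\tau_b>t-s)-\pr{\nu}{a}(\infty>\tau_b>t)$. After cancelling the leading $b^{2\nu}\cnu/(a^{2\nu}t^\nu)$ via the algebraic identity relating $b^{2\nu}\phi_a(t)$, $b^{2\nu}(b/a)^{2\nu}\phi_b(t)$ and $\phi_b(t)\pr{\nu}{a}(\infty>\tau_b>t)$, the dominant correction is obtained by rescaling $s=tv$, $v\in(0,1)$, inserting the asymptotic $\pr{\nu}{a}(\infty>\tau_b>u)\sim(b/a)^{2\nu}\cnu u^{-\nu}$ from~\tref{;th1}\,\thetag{i}, and computing
$\int_0^1[(1-v)^{-\nu}-1]v^{-\nu-1}\,dv=\kappa_\nu$;
the equality is verified via the substitution $v=y/(1+y)$, equivalently by analytic continuation of the Beta integral $B(s,1-\nu)$ at $s=-\nu$.

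This separates the three regimes. In~\thetag{i}~$\nu<1$ the integral converges and combines with the collateral $t^{-2\nu}$ contribution $b^{2\nu}\phi_b(t)\pr{\nu}{a}(\infty>\tau_b>t)\sim b^{4\nu}\cnu/(a^{2\nu}2^\nu\Gamma(\nu+1)t^{2\nu})$ to produce the constant $b^{4\nu}\cnu(1-\nu\kappa_\nu)/((2a^2)^\nu\Gamma(\nu+1))$. In~\thetag{ii}~$\nu=1$ the integrand reduces to $1/(v(1-v))$ and diverges at both endpoints; cutoffs at $v\sim 1/t$ (from $e^{-b^2/(2tv)}$) and $1-v\sim 1/t$ (from the breakdown of the $u^{-\nu}$ asymptotic) produce a factor $2\log t$ and give the $(\log t)/t^2$ rate with coefficient $-b^4C_1/a^2$. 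In~\thetag{iii}~$\nu>1$ the $t^{-2\nu}$ bound is no longer tight (the $v$-integral diverges at $v=1$); the correct correction is instead of order $t^{-\nu-1}$, extracted from a first-order Taylor expansion $\phi_b(t-\tau_b)\approx\phi_b(t)-\tau_b\phi_b'(t)$ weighted by $\ex{\nu}{a}[\tau_b;\tau_b<\infty]=\int_0^\infty\pr{\nu}{a}(\infty>\tau_b>u)\,du$, which is finite precisely when $\nu>1$ by~\tref{;th1}\,\thetag{i}. The strict negativity in~\thetag{iii} follows from $\phi_b'(t)<0$ and from the negative sign of the $t^{-\nu-1}$-coefficient in $b^{2\nu}[\phi_a(t)-(b/a)^{2\nu}\phi_b(t)]$.

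The main obstacle is the uniform control needed to interchange limit and integral when inserting the leading asymptotic of $\pr{\nu}{a}(\infty>\tau_b>u)$ into the rescaled integrand; this requires a bound $\pr{\nu}{a}(\infty>\tau_b>u)\le Ku^{-\nu}$ uniform in $u\ge 1$, which follows from the leading asymptotic together with the trivial bound by $1$. The borderline case~\thetag{ii} is the most delicate because the two candidate rates $t^{-2\nu}$ and $t^{-\nu-1}$ coincide and the logarithmic factor only emerges after a careful truncation tracking both endpoints of the $v$-integral simultaneously.
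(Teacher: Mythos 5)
Your overall route is essentially the paper's: both arguments rest on the strong Markov property applied at $\tau_b$ together with the explicit tail \eqref{;tail} of $\tau_0$, and your three pieces (the difference of two $\tau_0$-tails, the collateral term $b^{2\nu}\phi_b(t)\pr{\nu }{a}(\infty>\tau_b>t)$ of order $t^{-2\nu}$, and the rescaled convolution producing $\kappa_\nu$) correspond precisely to $I_1+I_2$, $I_3$ and $J$ in the decomposition the paper builds from \lref{;hitting}. Your constants check out, including the identity $\int_0^1[(1-v)^{-\nu}-1]v^{-\nu-1}\,dv=\kappa_\nu$, which is the same limit the paper extracts from $K_1(t;\la)$ in \lref{;kest}.

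The gap is in the error analysis of the convolution term, and it sits at the endpoint you did not flag. Write $g(u)=\pr{\nu }{a}(\infty>\tau_b>u)$. After your Fubini step the quantity to control is $\int_0^t s^{-\nu-1}e^{-b^2/(2s)}\,[g(t-s)-g(t)]\,ds$, and the contribution of $s\in(0,\la]$ is bounded only by a constant times $g(t-\la)-g(t)=\pr{\nu }{a}(t-\la<\tau_b\le t)$. The leading asymptotic $g(u)\sim Cu^{-\nu}$ together with the trivial bound gives merely $g(t-\la)-g(t)=o(t^{-\nu})$, whereas parts \thetag{i}, \thetag{ii}, \thetag{iii} require this increment to be $o(t^{-2\nu})$, $o((\log t)/t^{2})$ and $O(t^{-\nu-1})$ respectively; your proposed uniform bound $g(u)\le Ku^{-\nu}$ only handles the other endpoint $s\uparrow t$, where $t-s$ is small. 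This is exactly what the extra structure of \lref{;hitting} is for: the diagonal strip $\{t<s+u\le t+\la\}$ of $D_t$ is converted by the strong Markov property into $\pr{-\nu }{a}(t<\tau_0\le t+\la)$, which the explicit density of $\tau_0$ bounds by $c_1\la t^{-\nu-1}$ (estimate \eqref{;j1}), and the residual self-reference (the remainder $J$ reappearing inside its own estimate) is resolved by the two-sided inequalities of \lref{;jest}. To close your argument you would need either this device or an a priori local estimate $\pr{\nu }{a}(t-\la<\tau_b\le t)=O(\la t^{-\nu-1})$ for the law of $\tau_b$ itself, which does not follow from \tref{;th1}.
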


\begin{thm}\label{;improve2}
It holds that 
\begin{align*}
 \thetag{i}&~\text{for $\nu <1$}, \quad 
 \lim _{t\to \infty }t^{2\nu }
 \left( 
 \pr{-\nu }{a}(\tau _{b}>t)-\frac{\cnu }{t^{\nu }}
 \right) =
 \frac{b^{2\nu }}{2^{\nu }\Gamma (\nu +1)}\cnu 
 \left( 
 1-\nu \kappa _{\nu }
 \right) ; \\
 \thetag{ii}&~\text{for $\nu =1$}, \quad 
 \lim _{t\to \infty }\frac{t^2}{\log t}
 \left( 
 \pr{-1}{a}(\tau _{b}>t)-\frac{C_{1}}{t}
 \right) =-b^{2}C_{1}; \\
 \thetag{iii}&~\text{for $\nu >1$}, \\
 -&\infty <\liminf _{t\to \infty }t^{\nu +1}
 \left( 
 \pr{-\nu }{a}(\tau _{b}>t)-\frac{\cnu }{t^{\nu }}
 \right) 
 \le \limsup _{t\to \infty }t^{\nu +1}
 \left( 
 \pr{-\nu }{a}(\tau _{b}>t)-\frac{\cnu }{t^{\nu }}
 \right) <0. 
\end{align*}
\end{thm}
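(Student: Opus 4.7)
My plan is to reduce \tref{;improve2} to \tref{;improve} in the case $b>0$, and to handle $b=0$ by a direct expansion of an integral representation. The key observation is the elementary identity
\begin{align*}
 \pr{-\nu}{a}(\tau _{b}>t)=\left( \frac{a}{b}\right) ^{2\nu }
 \pr{\nu }{a}(\infty >\tau _{b}>t), \qquad b>0,
\end{align*}
which follows by comparing two representations already at our disposal: \eqref{;preq3} gives $\pr{-\nu}{a}(\tau _{b}>t)=a^{2\nu }\ex{\nu }{a}[(R_{t})^{-2\nu };\,I_{t}>b]$, while \eqref{;iden2} gives $\pr{\nu }{a}(\infty >\tau _{b}>t)=b^{2\nu }\ex{\nu }{a}[(R_{t})^{-2\nu };\,I_{t}>b]$. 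The use of \eqref{;preq3} on the event $\{I_{t}>b\}\subset \{t<\tau _{0}\}$ is legitimate precisely because $b>0$.

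With this proportionality in hand, \tref{;improve2} for $b>0$ follows by multiplying \tref{;improve} through by $(a/b)^{2\nu }$ and verifying that the constants on the right-hand sides agree. The arithmetic is painless: in \thetag{i} the factor becomes $(a/b)^{2\nu }\cdot b^{4\nu }/(2a^{2})^{\nu }=b^{2\nu }/2^{\nu }$; in \thetag{ii} it becomes $(a/b)^{2}\cdot b^{4}/a^{2}=b^{2}$; and in \thetag{iii} multiplication by the positive factor $(a/b)^{2\nu }$ preserves both strict inequalities.

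For $b=0$, the stated values in \thetag{i} and \thetag{ii} are zero by virtue of the factor $b^{2\nu }$, respectively $b^{2}$. I would use the integral representation
\begin{align*}
 \pr{-\nu }{a}(\tau _{0}>t)=\frac{a^{2\nu }}{2^{\nu }\Gamma (\nu )}
 \int _{t}^{\infty }\frac{ds}{s^{\nu +1}}\exp \!\left( -\frac{a^{2}}{2s}\right),
\end{align*}
which follows from \pref{;absrel} together with the identity recalled in the proof of \lref{;keylem}. A Taylor expansion of $\exp (-a^{2}/(2s))$ inside the integrand, followed by termwise integration, yields
\begin{align*}
 \pr{-\nu }{a}(\tau _{0}>t)=\frac{C_{\nu }}{t^{\nu }}
 -\frac{a^{2\nu +2}}{2^{\nu +1}(\nu +1)\Gamma (\nu )}\cdot \frac{1}{t^{\nu +1}}+O(t^{-\nu -2}),
\end{align*}
where $C_{\nu }=a^{2\nu }/(2^{\nu }\Gamma (\nu +1))$ at $b=0$. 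Since $\nu +1>2\nu $ for $\nu <1$, the remainder is $o(t^{-2\nu })$, settling \thetag{i}; for $\nu =1$ the $t^{-2}$ term is absorbed into $o(\log t/t^{2})$, settling \thetag{ii}; and for $\nu >1$ the expansion actually produces an exact negative constant as the limit of $t^{\nu +1}(\pr{-\nu }{a}(\tau _{0}>t)-C_{\nu }/t^{\nu })$, which a fortiori lies in $(-\infty ,0)$, settling \thetag{iii}.

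There is essentially no obstacle in this plan: the whole analytical burden is shifted either onto the (already announced) proof of \tref{;improve} or onto an expansion of an incomplete-gamma-type integral. The only subtlety is that the proportionality $\pr{-\nu }{a}(\tau _{b}>t)=(a/b)^{2\nu }\pr{\nu }{a}(\infty >\tau _{b}>t)$ degenerates at $b=0$, so that value must be handled separately; however, this case is in fact easier because the target right-hand sides collapse and only an upper estimate on a remainder is required.
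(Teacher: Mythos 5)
Your reduction is correct as far as it goes, but it is circular relative to the logical structure of the paper and leaves the entire analytical content of the theorem unproven. The proportionality $\pr{-\nu }{a}(\tau _{b}>t)=(a/b)^{2\nu }\,\pr{\nu }{a}(\infty >\tau _{b}>t)$ for $b>0$ is indeed valid (it is precisely \eqref{;relation}), your arithmetic checking that the constants transform correctly is right, and your direct treatment of the $b=0$ case via the incomplete-gamma expansion is fine. The difficulty is that the paper proves \tref{;improve} \emph{from} \tref{;improve2} by exactly this proportionality; there is no independent proof of \tref{;improve} onto which the ``analytical burden'' can be shifted. What you have established is that the two theorems are equivalent for $b>0$ --- which the paper already records immediately after stating \tref{;improve2} --- not that either of them holds. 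In particular, nothing in your argument produces the constant $\kp _{\nu }$, the $\log t$ factor at $\nu =1$, or the strict negativity of the $\limsup $ for $\nu >1$.

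The paper's actual proof works directly on $\pr{-\nu }{a}(\tau _{b}>t)$. Starting from the strong-Markov identity of \lref{;hitting}, it writes $\pr{-\nu }{a}(\tau _{b}>t)=I(t)-J(t)$ with
\begin{align*}
 I(t)=\frac{\pr{-\nu }{a}(\tau _{0}>t)-\pr{-\nu }{b}(\tau _{0}>t)}{\pr{-\nu }{b}(\tau _{0}\le t)},
\end{align*}
expands $I(t)$ to second order by means of \lref{;recursive} (yielding the corrections of order $t^{-2\nu }$, $t^{-2}$ and $t^{-\nu -1}$ in \pref{;iasympt}), and then bounds the convolution term $J(t)$ from above and below through \lsref{;jest} and \ref{;kest}; it is this second step that generates $\kp _{\nu }$ for $\nu <1$ and the logarithm for $\nu =1$. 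None of that work is avoided by the proportionality argument: to repair your proposal you would have to supply an independent proof of \tref{;improve}, which amounts to carrying out the above analysis anyway.
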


Notice that since 
\begin{align}\label{;relation}
 \pr{\nu }{a}(\infty >\tau _{b}>t)=
 \left( \frac{b}{a}\right) ^{2\nu }\pr{-\nu }{a}(\tau _{b}>t) 
\end{align} 
by \eqref{;iden2} and \eqref{;preq3}, it suffices to prove \tref{;improve2}. 
The proof utilizes the following relation for hitting distributions: 
\begin{lem}\label{;hitting}
 It holds that for every $t>0$, 
 \begin{align*}
  &\pr{-\nu }{a}(\tau _{b}>t) \notag \\
  &=\frac{1}{\pr{-\nu }{b}(\tau _{0}\le t)}
  \left\{  
  \pr{-\nu }{a}(\tau _{0}>t)-\pr{-\nu }{b}(\tau _{0}>t)
  -\int _{D_{t}}\pr{-\nu }{a}(\tau _{b}\in ds)\,
  \pr{-\nu }{b}(\tau _{0}\in du)
  \right\} , 
 \end{align*}
 where 
 \begin{align*}
  D_{t}:=\left\{ 
  (s,u)\in (0,\infty )^{2};\,
  s+u>t, s\le t,u\le t
  \right\} . 
 \end{align*}
\end{lem}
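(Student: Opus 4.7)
The plan is to decompose the tail probability $\pr{-\nu}{a}(\tau_{0}>t)$ by applying the strong Markov property of Bessel process at the stopping time $\tau_{b}$. The starting observation is that when $0<b<a$, continuity of paths forces every passage from $a$ to $0$ to visit $b$ first, so
$$\tau_{0}=\tau_{b}+\tau_{0}\circ\theta_{\tau_{b}}\qquad \pr{-\nu}{a}\text{-a.s.,}$$
where $\theta$ denotes the shift operator. Both $\tau_{b}$ and $\tau_{0}$ are a.s.\ finite under $\pr{-\nu}{a}$ since Bessel process with negative index reaches $0$ almost surely; in particular $\pr{-\nu}{b}(\tau_{0}\le t)>0$ for each $t>0$. (The boundary case $b=0$ is trivial: $D_{t}$ is empty and both sides collapse to $\pr{-\nu}{a}(\tau_{0}>t)$.)

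First I would split $\{\tau_{0}>t\}$ according to whether $\tau_{b}>t$ (in which case $\tau_{0}\ge\tau_{b}>t$ automatically) or $\tau_{b}\le t$, and in the latter case apply the strong Markov property at $\tau_{b}$ to obtain
$$\pr{-\nu}{a}(\tau_{0}>t)=\pr{-\nu}{a}(\tau_{b}>t)+\int_{0}^{t}\pr{-\nu}{a}(\tau_{b}\in ds)\,\pr{-\nu}{b}(\tau_{0}>t-s).$$
Next I would decompose the inner tail as $\pr{-\nu}{b}(\tau_{0}>t-s)=\pr{-\nu}{b}(\tau_{0}>t)+\pr{-\nu}{b}(t-s<\tau_{0}\le t)$, so that integration against $\pr{-\nu}{a}(\tau_{b}\in ds)$ over $(0,t]$ produces
$$\pr{-\nu}{a}(\tau_{b}\le t)\,\pr{-\nu}{b}(\tau_{0}>t)+\int_{D_{t}}\pr{-\nu}{a}(\tau_{b}\in ds)\,\pr{-\nu}{b}(\tau_{0}\in du),$$
the second term being precisely the double integral appearing in the statement. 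Substituting back and collecting terms yields
$$\pr{-\nu}{a}(\tau_{0}>t)-\pr{-\nu}{b}(\tau_{0}>t)-\int_{D_{t}}\pr{-\nu}{a}(\tau_{b}\in ds)\,\pr{-\nu}{b}(\tau_{0}\in du)=\pr{-\nu}{a}(\tau_{b}>t)\,\pr{-\nu}{b}(\tau_{0}\le t),$$
and dividing by $\pr{-\nu}{b}(\tau_{0}\le t)$ gives the claimed identity.

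The derivation is essentially mechanical once the additive decomposition of $\tau_{0}$ at $\tau_{b}$ is in place, so the main technical points are not serious obstacles but rather bookkeeping items: the path-continuity argument that yields $\tau_{0}=\tau_{b}+\tau_{0}\circ\theta_{\tau_{b}}$ a.s., a Fubini-type step to express the iterated integral on $\{0<s\le t,\,t-s<u\le t\}$ as the single integral over $D_{t}$, and the strict positivity of $\pr{-\nu}{b}(\tau_{0}\le t)$ needed for the final division. Each of these is routine.
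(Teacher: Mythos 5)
Your proof is correct and follows essentially the same route as the paper: both apply the strong Markov property at $\tau _{b}$ to write $\pr{-\nu }{a}(\tau _{0}>t,\tau _{b}\le t)$ as the convolution $\int _{0}^{t}\pr{-\nu }{a}(\tau _{b}\in ds)\,\pr{-\nu }{b}(\tau _{0}>t-s)$, split that integral into the $D_{t}$ part plus $\pr{-\nu }{a}(\tau _{b}\le t)\,\pr{-\nu }{b}(\tau _{0}>t)$, and use $\tau _{0}\ge \tau _{b}$ a.s.\ to solve for $\pr{-\nu }{a}(\tau _{b}>t)$. The only differences are bookkeeping (your explicit remarks on the case $b=0$ and on the positivity of the denominator), so no further comparison is needed.
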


\begin{proof}
 By the strong Markov property, 
 \begin{align*}
  \pr{-\nu }{a}(\tau _{0}>t,\tau _{b}\le t)
  =\int _{0}^{t}\pr{-\nu }{a}(\tau _{b}\in ds)
  \pr{-\nu }{b}(\tau _{0}>t-s). 
 \end{align*}
 By the definition of $D_{t}$, we may rewrite the right-hand side as 
 \begin{align*}
  \int _{D_{t}}\pr{-\nu }{a}(\tau _{b}\in ds)
  \pr{-\nu }{b}(\tau _{0}\in du)+
  \pr{-\nu }{a}(\tau _{b}\le t)\pr{-\nu }{b}(\tau _{0}>t). 
 \end{align*}
 On the other hand, the left-hand side is equal to 
 \begin{align*}
  \pr{-\nu }{a}(\tau _{0}>t)-\pr{-\nu }{a}(\tau _{0}>t,\tau _{b}>t)
  =\pr{-\nu }{a}(\tau _{0}>t)-\pr{-\nu }{a}(\tau _{b}>t)
 \end{align*}
 since $\tau _{0}\ge \tau _{b}$ $\pr{-\nu }{a}$-a.s. 
 Combining these leads to the desired identity. 
\end{proof}

As a preparatory step to the proof of \tref{;improve2}, 
we give another proof of \tref{;th1}\,\thetag{ii} 
using \lref{;hitting}. Set 
\begin{align*}
 I(t):=\frac{\pr{-\nu }{a}(\tau _{0}>t)-\pr{-\nu }{b}(\tau _{0}>t)}
 {\pr{-\nu }{b}(\tau _{0}\le t)}, && 
 J(t):=\frac{\int _{D_{t}}\pr{-\nu }{a}(\tau _{b}\in ds)\,
  \pr{-\nu }{b}(\tau _{0}\in du)}{\pr{-\nu }{b}(\tau _{0}\le t)}
\end{align*}
so that 
\begin{align}\label{;iden4}
 \pr{-\nu }{a}(\tau _{b}>t)=I(t)-J(t) 
\end{align}
by \lref{;hitting}. Since we have the expression 
\begin{align}\label{;tail}
 \pr{-\nu }{x}(\tau _{0}>t) =
 \frac{x^{2\nu }}{2^{\nu }\Gamma (\nu )}
  \int _{t}^\infty \frac{ds}{s^{\nu +1}}
  \exp \left( -\frac{x^2}{2s}\right) , \quad t>0, 
\end{align}
for every $x\ge 0$ (see \rref{;gamma}), it is immediate that 
\begin{align*}
 \lim _{t\to \infty }t^{\nu }I(t)
 =\cnu . 
\end{align*}
Therefore in order to prove \tref{;th1}\,\thetag{ii}, it suffices to show 
that 
\begin{align*}
 \lim _{t\to \infty }t^{\nu }J(t)=0. 
\end{align*}
To this end, take $t,\la >0$ in such a way that $t>\la $. Because of the 
inclusion 
\begin{multline*}
 D_{t}\subset 
 \left\{ 
 (s,u)\in (0,\infty )^2;\,t+\la \ge s+u>t
 \right\} \\
 \cup 
 \left\{ 
 (s,u)\in (0,\infty )^{2};\,
 s+u> t+\la ,s\le t,u\le t
 \right\} , 
\end{multline*}
we have 
\begin{align}\label{;j}
 &\pr{-\nu }{b}(\tau _{0}\le t)J(t) \notag \\
 &\le \pr{-\nu }{a}(t+\la \ge \tau _{0}>t)
 +\int _{\la }^{t}\pr{-\nu }{b}(\tau _{0}\in du)\,
 \pr{-\nu }{a}(t\ge \tau _{b}>t+\la -u) \notag \\
 &=:J_{1}(t;\la )+J_{2}(t;\la ), 
\end{align}
where the expression of $J_{1}$ is due to the strong Markov property. 
By \eqref{;tail} we have 
\begin{align*}
 J_{1}(t;\la )&\le \frac{a^{2\nu }}{2^{\nu }\Gamma (\nu )}
 \int _{t }^{t+\la }\frac{ds}{s^{\nu +1}}\\
 &=\frac{a^{2\nu }}{2^{\nu }\Gamma (\nu +1)}
 \left( \frac{1}{t}\right) ^{\nu }\left\{ 
 1-\left( 
 \frac{t}{t+\la }
 \right) ^{\nu }
 \right\} . 
\end{align*}
Since there exists a positive constant $c$ such that 
$
 1-x^{\nu }\le c(1-x)
$ 
for all $0\le x\le 1$, we obtain an estimate 
\begin{align}\label{;j1}
 J_{1}(t;\la )\le c_{1}\frac{\la }{t^{\nu +1}}. 
\end{align}
Here as well as in what follows, every $c_{i}$ denotes a positive 
constant dependent only on $a,b$ and $\nu $. As for $J_{2}$, we use \eqref{;tail} to 
rewrite 
\begin{align}\label{;j21}
 J_{2}(t;\la )=
 \frac{b^{2\nu }}{2^{\nu }\Gamma (\nu )}
 \int _{\la }^{t}\frac{du}{(t+\la -u)^{\nu +1}}\,
 \exp \left\{ 
 -\frac{b^2}{2(t+\la -u)}
 \right\} \pr{-\nu }{a}(t\ge \tau _{b}>u). 
\end{align}
Since $\tau _{b}\le \tau _{0}$ $\pr{-\nu }{a}$-a.s., 
\begin{align*}
 \pr{-\nu }{a}(t\ge \tau _{b}>u)&\le \pr{-\nu }{a}(\tau _{0}>u)\\
 &\le \frac{a^{2\nu }}{2^{\nu }\Gamma (\nu +1)}\frac{1}{u^{\nu }}
\end{align*}
by \eqref{;tail}. We substitute this estimate into \eqref{;j21} to 
obtain a bound 
\begin{align}\label{;j22}
 J_{2}(t;\la )\le c_{2}\int _{\la }^{t}
 \frac{du}{u^{\nu }(t+\la -u)^{\nu +1}}. 
\end{align}
We now fix $\ve \in (0,1)$ arbitrarily and let $\la =\ve t$. Then by 
\eqref{;j1}, 
\begin{align*}
 \limsup _{t\to \infty }t^{\nu }J_{1}(t;\ve t)\le c_{1}\ve . 
\end{align*}
On the other hand, by \eqref{;j22}, 
\begin{align*}
 J_{2}(t;\ve t)\le \frac{c_{2}}{(\ve t)^{\nu +1}}
 \int _{\ve t}^{t}\frac{du}{u^{\nu }}, 
\end{align*}
whence 
\begin{align*}
 \lim _{t\to \infty }t^{\nu }J_{2}(t;\ve t)=0. 
\end{align*}
Combining these with \eqref{;j}, we have 
\begin{align*}
 \limsup _{t\to \infty }t^{\nu }J(t)\le c_{1}\ve . 
\end{align*}
This shows \tref{;th1}\,\thetag{ii} as $\ve $ is arbitrary. 

We proceed to the proof of \tref{;improve2}. 
We begin with the following lemma: 
\begin{lem}\label{;recursive}
 One has for every $x\ge 0$ and $t>0$, 
 \begin{align*}
  \pr{-\nu }{x}(\tau _{0}>t)
  =\frac{x^{2\nu }}{(2t)^{\nu }\Gamma (\nu +1)}
  \exp \left( 
  -\frac{x^2}{2t}
  \right) 
  +\pr{-\nu -1}{x}(\tau _{0}>t). 
 \end{align*}
\end{lem}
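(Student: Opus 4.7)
The plan is to derive the identity by a direct calculus manipulation of the explicit tail formula \eqref{;tail}. Applying \eqref{;tail} once with index $\nu$ and once with $\nu+1$, and using $\Gamma(\nu+1)=\nu\Gamma(\nu)$, the claimed identity reduces to showing
\begin{equation*}
 \nu \int_{t}^{\infty}\frac{e^{-x^{2}/(2s)}}{s^{\nu+1}}\,ds
 -\frac{x^{2}}{2}\int_{t}^{\infty}\frac{e^{-x^{2}/(2s)}}{s^{\nu+2}}\,ds
 =\frac{1}{t^{\nu}}e^{-x^{2}/(2t)}
\end{equation*}
for every $x>0$; the case $x=0$ is trivial since all terms in the lemma vanish.

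The key observation is that the integrand on the left-hand side is precisely $\frac{d}{ds}\bigl[-s^{-\nu}e^{-x^{2}/(2s)}\bigr]$, so the left-hand side telescopes to $t^{-\nu}e^{-x^{2}/(2t)}$ upon using $s^{-\nu}e^{-x^{2}/(2s)}\to 0$ as $s\to\infty$ (which holds because $\nu>0$). Equivalently, one may integrate by parts in $\int_{t}^{\infty}s^{-\nu-1}e^{-x^{2}/(2s)}\,ds$ with $u=e^{-x^{2}/(2s)}$ and $dv=s^{-\nu-1}\,ds$: the boundary term at $s=t$ yields the $e^{-x^{2}/(2t)}$ factor, while the remaining integral supplies, via \eqref{;tail} with $\nu$ replaced by $\nu+1$, precisely the $\pr{-\nu-1}{x}(\tau_{0}>t)$ contribution.

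The proof is essentially a one-line computation once \eqref{;tail} is in hand, so the only point that requires care is the bookkeeping of the constants $\Gamma(\nu)$ versus $\Gamma(\nu+1)$ and the factors $2^{\nu}$ versus $2^{\nu+1}$. Conceptually, the lemma is valuable because it extracts the leading exponential contribution from $\pr{-\nu}{x}(\tau_{0}>t)$ in closed form, leaving a remainder of exactly the same structure but with the index shifted up by one; this recursive shape is what will drive the iterated expansion underlying the sharp remainder estimates in \tref{;improve2}.
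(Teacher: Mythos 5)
Your proposal is correct, and it is essentially the paper's own argument: the paper also starts from the explicit tail representation \eqref{;tail} and integrates by parts in $\int_{t}^{\infty}s^{-\nu -1}e^{-x^{2}/(2s)}\,ds$, which is exactly your observation that the relevant combination is the exact derivative of $-s^{-\nu }e^{-x^{2}/(2s)}$. The constant bookkeeping in your reduction (using $\Gamma (\nu +1)=\nu \Gamma (\nu )$ and the factor $2^{\nu +1}$) checks out, and the boundary term at infinity vanishes since $\nu >0$, so nothing is missing.
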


\begin{proof}
 By integration by parts, 
 \begin{align*}
  \int _{t}^{\infty }\frac{ds}{s^{\nu +1}}\,\exp \left( 
  -\frac{x^2}{2s}
  \right) 
  =\frac{1}{\nu t^{\nu }}\exp \left( 
  -\frac{x^2}{2t}
  \right) 
  +\frac{x^{2}}{2\nu }\int _{t}^{\infty }\frac{ds}{s^{\nu +2}}\,
  \exp \left( 
  -\frac{x^2}{2s}
  \right) . 
 \end{align*}
 Plugging this expression into \eqref{;tail}, we obtain the equality. 
\end{proof} 

Using this lemma, we divide $I(t)$ into three parts: 
\begin{align*}
 I(t)=I_{1}(t)+I_{2}(t)+I_{3}(t), 
\end{align*}
where we set 
\begin{equation}\label{;is0}
\begin{split}
 I_{1}(t)&=\frac{1}{(2t)^{\nu }\Gamma (\nu +1)}
 \left\{ 
 a^{2\nu }\exp \left( 
  -\frac{a^2}{2t}
  \right) -b^{2\nu }\exp \left( 
  -\frac{b^2}{2t}
  \right) 
 \right\} , \\
 I_{2}(t)&=\pr{-\nu -1}{a}(\tau _{0}>t)-\pr{-\nu -1}{b}(\tau _{0}>t), \\
 I_{3}(t)&=\frac{\pr{-\nu }{b}(\tau _{0}>t)}{\pr{-\nu }{b}(\tau _{0}\le t)}
 \left\{ 
 \pr{-\nu }{a}(\tau _{0}>t)-\pr{-\nu }{b}(\tau _{0}>t)
 \right\} . 
\end{split}
\end{equation}
Using the fact that $(1-e^{-x})/x\xrightarrow[x\to 0]{}1$ 
for $I_1$ and \eqref{;tail} for $I_{2}$ and $I_{3}$, 
we see that 
\begin{equation*}
 \begin{split}
 I_{1}(t)&=\frac{\cnu }{t^{\nu }}
 -\frac{a^{2\nu +2}-b^{2\nu +2}}{(2t)^{\nu +1}\Gamma (\nu +1)}
 +o(t^{-\nu -1}), \\
 I_{2}(t)&=\frac{C_{\nu +1}}{t^{\nu +1}}
 +o(t^{-\nu -1}), \\
 I_{3}(t)&=\frac{b^{2\nu }}{2^{\nu }\Gamma (\nu +1)}
 \cdot \frac{\cnu }{t^{2\nu }}
 +o(t^{-2\nu }). 
 \end{split}
\end{equation*}
We put together these asymptotics into a proposition. 

\begin{prop}\label{;iasympt}
 It holds that as $t\to \infty $, 
 \begin{align*}
  \thetag{i}&~\text{for $\nu <1$}, \quad 
  I(t)=\frac{\cnu }{t^{\nu }}+\frac{b^{2\nu }}{2^{\nu }\Gamma (\nu +1)}
  \cdot \frac{\cnu }{t^{2\nu }}+o(t^{-2\nu }); \\
 \thetag{ii}&~\text{for $\nu =1$}, \quad 
  I(t)=\frac{C_{1}}{t}-\frac{C_{1}^{2}}{2t^2}+o(t^{-2}); \\
  \thetag{iii}&~\text{for $\nu >1$}, \quad 
  I(t)=\frac{\cnu }{t^{\nu }}
  -\frac{\nu C_{\nu +1}}{t^{\nu +1}}+o(t^{-\nu -1}). 
 \end{align*}
\end{prop}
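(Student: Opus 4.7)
The plan is to combine the three asymptotic expansions for $I_{1}(t), I_{2}(t), I_{3}(t)$ displayed immediately before the proposition and then split into the three regimes according to how $2\nu$ compares with $\nu+1$.

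First I would briefly verify those three expansions. The expansion of $I_{1}(t)$ is an immediate Taylor expansion $e^{-x}=1-x+O(x^{2})$ applied with $x=a^{2}/(2t)$ and $x=b^{2}/(2t)$ in the defining formula \eqref{;is0}: the constant term reconstructs $C_{\nu}/t^{\nu}$, the first-order term contributes $-(a^{2\nu+2}-b^{2\nu+2})/((2t)^{\nu+1}\Gamma(\nu+1))$, and the remainder is $O(t^{-\nu-2})=o(t^{-\nu-1})$. For $I_{2}(t)$ I would apply \eqref{;tail} with index $-\nu-1$ and perform one integration by parts (just as in the proof of \lref{;recursive}) to read off $\pr{-\nu-1}{x}(\tau_{0}>t)=x^{2\nu+2}/((2t)^{\nu+1}\Gamma(\nu+2))+o(t^{-\nu-1})$, then subtract to obtain $C_{\nu+1}/t^{\nu+1}+o(t^{-\nu-1})$. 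For $I_{3}(t)$ I would use that $\pr{-\nu}{b}(\tau_{0}\le t)\to 1$ while \eqref{;tail} yields $\pr{-\nu}{b}(\tau_{0}>t)\sim b^{2\nu}/(2^{\nu}\Gamma(\nu+1)t^{\nu})$ and $\pr{-\nu}{a}(\tau_{0}>t)-\pr{-\nu}{b}(\tau_{0}>t)\sim C_{\nu}/t^{\nu}$; their product is $(b^{2\nu}C_{\nu})/(2^{\nu}\Gamma(\nu+1)t^{2\nu})+o(t^{-2\nu})$.

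It then remains to compare magnitudes in the three regimes. In (i) the hypothesis $\nu<1$ gives $2\nu<\nu+1$, so the second-order contributions from $I_{1}$ and $I_{2}$ are both $o(t^{-2\nu})$ and can be absorbed; only the leading term of $I_{1}$ and the $t^{-2\nu}$ term from $I_{3}$ remain, yielding the claim. In (iii) the hypothesis $\nu>1$ gives $2\nu>\nu+1$, so $I_{3}(t)=o(t^{-\nu-1})$, and the second-order contributions of $I_{1}$ and $I_{2}$ collapse, using $\Gamma(\nu+2)=(\nu+1)\Gamma(\nu+1)$, into
\begin{equation*}
-\frac{a^{2\nu+2}-b^{2\nu+2}}{(2t)^{\nu+1}\Gamma(\nu+1)}+\frac{C_{\nu+1}}{t^{\nu+1}}=-\frac{\nu C_{\nu+1}}{t^{\nu+1}}.
\end{equation*}
Case (ii) is the only one demanding an actual cancellation: at $\nu=1$ all three corrections live at order $t^{-2}$, and substituting $C_{1}=(a^{2}-b^{2})/2$, $C_{2}=(a^{4}-b^{4})/8$ gives
\begin{equation*}
-\frac{a^{4}-b^{4}}{4}+C_{2}+\frac{b^{2}C_{1}}{2}=-\frac{(a^{2}-b^{2})^{2}}{8}=-\frac{C_{1}^{2}}{2}
\end{equation*}
as the coefficient of $t^{-2}$.

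The only mild obstacle is the bookkeeping in the critical case $\nu=1$: one has to spot that the three apparently independent second-order contributions telescope to a perfect square $-(a^{2}-b^{2})^{2}/8$. Everything else is Taylor expansion of an exponential, a single integration by parts in \eqref{;tail}, and an elementary comparison of decay rates.
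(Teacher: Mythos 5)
Your proposal is correct and follows essentially the same route as the paper: the paper likewise derives the stated expansions of $I_{1}$, $I_{2}$, $I_{3}$ from \eqref{;is0} and \eqref{;tail} (via $(1-e^{-x})/x\to 1$ for $I_{1}$) and then sums them, comparing $2\nu$ with $\nu+1$ in each regime. Your explicit verification of the cancellation at $\nu=1$ and the identity $(\nu+1)C_{\nu+1}-C_{\nu+1}=\nu C_{\nu+1}$ for $\nu>1$ matches what the paper leaves implicit.
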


\begin{rem}
 It is easily deduced from \eqref{;is0} that the terms of 
 $o$-symbol in \thetag{i}, \thetag{ii} and \thetag{iii} can be 
 sharpened by 
 $O\left( 
  1/t^{(3\nu )\wedge (\nu +1)}
  \right) $, $O(1/t^{3})$ and 
  $O\left( 1/t^{(2\nu )\wedge (\nu +2)}\right) $, respectively.  
\end{rem}

As to $J(t)$ in the decomposition \eqref{;iden4}, we let $t,\la >0$ be 
such that $t>\la $ and set 
\begin{align*}
 K(t;\la ):=\int _{\la }^{t}\pr{-\nu }{b}(\tau _{0}\in du)
 \left( 
 I(t+\la -u)-I(t)
 \right) . 
\end{align*}
Recall \eqref{;j}. 

\begin{lem}\label{;jest}
The following estimates hold true: 
\begin{align}
 \pr{-\nu }{b}(\tau _{0}\le \la )J(t)
 &\le J_{1}(t;\la )+K(t;\la ), \label{;jupper}\\
 \pr{-\nu }{b}(\tau _{0}\le \la )J(t)
 &\ge K(t;\la )-\int _{\la }^{t}\pr{-\nu }{b}(\tau _{0}\in du)
 J(t+\la -u). \label{;jlower}
\end{align}
\end{lem}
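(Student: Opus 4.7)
The plan is to derive both bounds by combining the inequality \eqref{;j} (and its natural converse) with a rewriting of $J_{2}(t;\la)$ based on the identity \eqref{;iden4}. Specifically, applying $\pr{-\nu}{a}(\tau_{b}>s)=I(s)-J(s)$ at $s=t+\la-u$ and $s=t$ for $u\in(\la,t]$ gives
\begin{align*}
 \pr{-\nu}{a}(t\ge \tau_{b}>t+\la-u)
 =[I(t+\la-u)-I(t)]+J(t)-J(t+\la-u).
\end{align*}
Integrating this against $\pr{-\nu}{b}(\tau_{0}\in du)$ on $(\la,t]$ and invoking the definition of $K(t;\la)$ yields
\begin{align*}
 J_{2}(t;\la)=K(t;\la)+J(t)\,\pr{-\nu}{b}(\la<\tau_{0}\le t)
 -\int_{\la}^{t}\pr{-\nu}{b}(\tau_{0}\in du)\,J(t+\la-u).
\end{align*}

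For the upper bound \eqref{;jupper}, I would start from \eqref{;j}, substitute this expression for $J_{2}(t;\la)$, and transfer the term $J(t)\,\pr{-\nu}{b}(\la<\tau_{0}\le t)$ to the left-hand side, where it combines with $\pr{-\nu}{b}(\tau_{0}\le t)J(t)$ to produce $\pr{-\nu}{b}(\tau_{0}\le \la)J(t)$. The remaining integral on the right is nonpositive and is simply discarded.

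For the lower bound \eqref{;jlower}, I would observe the reverse inclusion $D_{t}\supset \{(s,u)\in(0,\infty)^{2}:s+u>t+\la,\,s\le t,\,u\le t\}$, so that Fubini applied to the product-measure representation of $\pr{-\nu}{b}(\tau_{0}\le t)J(t)$ (i.e.\ the numerator in the definition of $J(t)$) yields $\pr{-\nu}{b}(\tau_{0}\le t)J(t)\ge J_{2}(t;\la)$. Substituting the same expression for $J_{2}$ and performing the identical rearrangement --- this time retaining the nonpositive integral on the right rather than discarding it --- gives \eqref{;jlower}. No step is genuinely hard; the main obstacle is purely algebraic bookkeeping to make sure that the three pieces produced by the rewriting of $J_{2}$ recombine correctly after the cancellation between $\pr{-\nu}{b}(\tau_{0}\le t)$ and $\pr{-\nu}{b}(\la<\tau_{0}\le t)$.
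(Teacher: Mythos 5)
Your proposal is correct and follows essentially the same route as the paper: the same rewriting of $J_{2}(t;\la )$ via \eqref{;iden4} into $K(t;\la )+J(t)\,\pr{-\nu }{b}(\la <\tau _{0}\le t)-\int _{\la }^{t}\pr{-\nu }{b}(\tau _{0}\in du)\,J(t+\la -u)$, combined with \eqref{;j} for the upper bound and with $\pr{-\nu }{b}(\tau _{0}\le t)J(t)\ge J_{2}(t;\la )$ for the lower bound. The cancellation producing $\pr{-\nu }{b}(\tau _{0}\le \la )J(t)$ and the discarding (resp.\ retention) of the nonnegative integral term are exactly as in the paper's argument.
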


\begin{proof}
 By the definition of $J_{2}$ and \eqref{;iden4}, 
 \begin{align*}
  J_{2}(t;\la )&=\int _{\la }^{t}\pr{-\nu }{b}(\tau _{0}\in du)
  \left( 
  I(t+\la -u)-J(t+\la -u)-I(t)+J(t)
  \right) \\
  &\le K(t;\la )+\left\{ 
  \pr{-\nu }{b}(\tau _{0}\le t)
  -\pr{-\nu }{b}(\tau _{0}\le \la )
  \right\} J(t), 
 \end{align*}
 where the inequality is due to the nonnegativity of $J(t+\la -u)$. 
 Plugging this estimate into \eqref{;j}, we obtain \eqref{;jupper}. 
 The lower estimate \eqref{;jlower} is proved similarly since 
 \begin{align*}
  \pr{-\nu }{b}(\tau _{0}\le t)J(t)\ge J_{2}(t;\la )
 \end{align*}
 by the definition of $J_{2}$. 
\end{proof}

Set 
\begin{align*}
 c_{3}:=\sup _{t>0}
 t^{(2\nu )\wedge (\nu +1)}\left| 
 I(t)-\frac{\cnu }{t^{\nu }}
 \right| , 
\end{align*}
which is finite by \pref{;iasympt}. Then, by noting 
\begin{align*}
 \frac{\cnu }{t^{\nu }}-\frac{c_{3}}{t^{(2\nu )\wedge (\nu +1)}}
 \le I(t)\le \frac{\cnu }{t^{\nu }}+\frac{c_{3}}{t^{(2\nu )\wedge (\nu +1)}} 
\end{align*}
for all $t>0$ and by \eqref{;tail}, we have upper and lower bounds on 
$K(t;\la )$ as follows: 
\begin{align}
 K(t;\la )&\le \frac{b^{2\nu }}{2^{\nu }\Gamma (\nu )}\cnu 
 K_{1}(t;\la )+\frac{b^{2\nu }}{2^{\nu }\Gamma (\nu )}c_{3}K_{2}(t;\la )
 +\frac{c_{3}\pr{-\nu }{b}(t\ge \tau _{0}>\la )}{t^{(2\nu )\wedge (\nu +1)}}, 
 \label{;kupper} \\
 K(t;\la )&\ge \frac{b^{2\nu }}{2^{\nu }\Gamma (\nu )}\cnu 
 \exp \left( -\frac{b^2}{2\la }\right) K_{1}(t;\la )
 -\frac{b^{2\nu }}{2^{\nu }\Gamma (\nu )}c_{3}K_{2}(t;\la )
 -\frac{c_{3}\pr{-\nu }{b}(t\ge \tau _{0}>\la )}{t^{(2\nu )\wedge (\nu +1)}}, 
 \label{;klower} 
\end{align}
where 
\begin{align*}
 K_{1}(t;\la ):=\int _{\la }^{t}\frac{du}{u^{\nu +1}}
 \left\{ 
 \frac{1}{(t+\la -u)^{\nu }}-\frac{1}{t^{\nu }}
 \right\} , 
 && 
 K_{2}(t;\la ):=\int _{\la }^{t}
 \frac{du}{u^{\nu +1}(t+\la -u)^{(2\nu )\wedge (\nu +1)}}. 
\end{align*}

\begin{lem}\label{;kest}
 \thetag{1}~It holds that as $t\to \infty $, 
 \begin{align*}
  K_{1}(t;\la )=\frac{1}{(t+\la )^{2\nu }}
  \int _{1}^{t/\la }\frac{(v+1)^{2\nu }-v^{2\nu }}{v^{\nu +1}}\,dv
  +O(t^{-\nu -1}). 
 \end{align*}
 \thetag{2}~It holds that 
 \begin{align*}
  \limsup _{t\to \infty }t^{(2\nu )\wedge (\nu +1)}
  K_{2}(t;\la )\le \frac{2}{\nu \la ^{\nu }}. 
 \end{align*}
\end{lem}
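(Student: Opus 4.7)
My plan is to split
\[
K_1(t;\la) \;=\; \int_\la^t \frac{du}{u^{\nu+1}(t+\la-u)^\nu} \;-\; \frac{1}{t^\nu}\int_\la^t \frac{du}{u^{\nu+1}},
\]
and transform the first integral, call it $I_a(t;\la)$, via the substitution $u=(t+\la)/(1+w)$. This maps $u\in[\la,t]$ onto $w\in[\la/t,t/\la]$ and, after cancellation of the Jacobian, yields $I_a=(t+\la)^{-2\nu}\int_{\la/t}^{t/\la}(1+w)^{2\nu-1}w^{-\nu}\,dw$. Applying the involution $w\mapsto 1/w$ to the subinterval $[\la/t,1]$ converts it to an integral over $[1,t/\la]$ with integrand $(1+w)^{2\nu-1}w^{-\nu-1}$; adding it to the other half collapses everything to $I_a=(t+\la)^{-2\nu}\int_1^{t/\la}(1+w)^{2\nu}w^{-\nu-1}\,dw$. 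Writing $(1+w)^{2\nu}=[(1+w)^{2\nu}-w^{2\nu}]+w^{2\nu}$ inside the integrand and computing $\int_1^{t/\la}w^{\nu-1}\,dw$ elementarily produces the stated main term plus the explicit scalar residual
\[
\frac{t^\nu-\la^\nu}{\nu\la^\nu(t+\la)^{2\nu}}\;-\;\frac{1}{\nu\la^\nu t^\nu}\;+\;\frac{1}{\nu t^{2\nu}}.
\]
A Taylor expansion of $(t+\la)^{-2\nu}=t^{-2\nu}\bigl(1-2\nu\la/t+O(t^{-2})\bigr)$ shows that the $t^{-\nu}$ and $t^{-2\nu}$ contributions cancel pairwise, leaving a leading term $-2\la^{1-\nu}/t^{\nu+1}$, which is $O(t^{-\nu-1})$.

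\textbf{Part (2).} My guiding heuristic is that the integrand $u^{-\nu-1}(t+\la-u)^{-\alpha}$ of $K_2$, with $\alpha:=(2\nu)\wedge(\nu+1)$, concentrates near $u=\la$ and yields the principal mass $1/(\nu\la^\nu t^\alpha)$; when $\alpha=\nu+1$ (equivalently $\nu\ge 1$), the symmetry $u\leftrightarrow t+\la-u$ makes $u$ near $t$ contribute an equal second copy, which is the origin of the factor $2$. I would fix $\eta\in(0,1/2)$ and split $[\la,t]$ into three subintervals at $\la+\eta(t-\la)$ and $t-\eta(t-\la)$. On the left piece, the bound $(t+\la-u)^{-\alpha}\le((1-\eta)(t-\la))^{-\alpha}$ together with $\int_\la^\infty u^{-\nu-1}\,du=1/(\nu\la^\nu)$ gives $\limsup_t t^\alpha\cdot(\mathrm{left})\le 1/[(1-\eta)^\alpha\nu\la^\nu]$. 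On the right piece, the substitution $s=t+\la-u$ transforms it to $\int_\la^{\la+\eta(t-\la)}(t+\la-s)^{-\nu-1}s^{-\alpha}\,ds$; bounding $(t+\la-s)^{-\nu-1}\le((1-\eta)(t-\la))^{-\nu-1}$ and evaluating the $s$-integral elementarily, the contribution to $\limsup t^\alpha$ is $\le 1/[(1-\eta)^{\nu+1}\nu\la^\nu]$ when $\nu\ge 1$ (where $\alpha=\nu+1$) and vanishes when $\nu<1$ (where $(t-\la)^{-\nu-1}$ beats $t^{-\alpha}=t^{-2\nu}$). The middle piece, controlled by $u,t+\la-u\ge\eta(t-\la)$, is of order $t^{-\nu-\alpha}=o(t^{-\alpha})$. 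Letting $\eta\downarrow 0$ yields $\limsup t^\alpha K_2(t;\la)\le 2/(\nu\la^\nu)$ in both regimes.

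The main obstacle will be the subcase analysis in Part (2): the right-piece $s$-integral is bounded, logarithmic, or polynomially growing according to whether $\nu>1/2$, $=1/2$, or $<1/2$, and in each case one must verify that the outside gain $(t-\la)^{-\nu-1}$ is sufficient to keep the right piece at order $t^{-\alpha}$ or below; moreover, the factor $2$ in the final bound is sharp precisely when $\alpha=\nu+1$, i.e.\ exactly when the $K_2$-integrand is symmetric under $u\leftrightarrow t+\la-u$. In Part (1), the subtlety is purely algebraic: the substitution $u=(t+\la)/(1+w)$ combined with the involution $w\leftrightarrow 1/w$ has to be carried out carefully, but once these moves yield the one-sided representation of $I_a$, everything reduces to a routine Taylor expansion.
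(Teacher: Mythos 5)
Your proof is correct, and the two parts sit differently relative to the paper. Part (1) is in substance the paper's own argument: the substitution $u=(t+\la)/(1+w)$ followed by the involution $w\mapsto 1/w$ is precisely the symmetrization underlying the paper's auxiliary \lref{;integral} (proved there via the M\"obius change of variables $z=c(x+c)/(y+c)$), and your one-sided representation $(t+\la)^{-2\nu}\int_1^{t/\la}(1+w)^{2\nu}w^{-\nu-1}\,dw$ coincides with the paper's $\la^{-\nu}(t+\la)^{-2\nu}\int_\la^t(u+\la)^{2\nu}u^{-\nu-1}\,du$ after $u=\la w$; your explicit residual and its expansion to $-2\la^{1-\nu}t^{-\nu-1}+O(t^{-2\nu-1})$ agree with the paper's $O(t^{-\nu-1})$ error term. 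Part (2) takes a genuinely different route: the paper applies \lref{;integral} once more to push both singularities of the $K_2$-integrand to the left endpoint and reads off the bound directly, the factor $2$ arising from the crude inequality $(\la/u)^{\nu+1}\le(\la/u)^{2\nu}$ for $u\ge\la$ when $\nu<1$ (and its analogue for $\nu\ge 1$); you instead split $[\la,t]$ at $\la+\eta(t-\la)$ and $t-\eta(t-\la)$ and let $\eta\downarrow 0$. Your splitting is more elementary and more informative --- it shows the constant $2$ is contributed by two symmetric endpoint masses exactly when $(2\nu)\wedge(\nu+1)=\nu+1$, and for $\nu<1$ it actually yields the sharper bound $1/(\nu\la^{\nu})$ --- at the price of the three-case analysis of the right piece, which you carry out correctly (bounded, logarithmic, or of order $t^{1-2\nu}$ according to $\nu>1/2$, $=1/2$, $<1/2$, always beaten by the outer factor $(t-\la)^{-\nu-1}$). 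The paper's route is shorter but delivers only the uniform constant $2$, which is all the lemma asserts; both arguments are complete.
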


\begin{proof}
\thetag{1}~By \lref{;integral} in Appendix, 
\begin{align*}
 K_{1}(t;\la )&=\frac{1}{\la ^{\nu }(t+\la )^{2\nu }}
 \int _{\la }^{t}\frac{(u+\la )^{2\nu }}{u^{\nu +1}}\,du
 -\frac{1}{\nu }\left( 
 \frac{1}{\la ^{\nu }}-\frac{1}{t^{\nu }}
 \right) \frac{1}{t^{\nu }}\\
 &=\frac{1}{\la ^{\nu }(t+\la )^{2\nu }}
 \int _{\la }^{t}\frac{(u+\la )^{2\nu }-u^{2\nu }}{u^{\nu +1}}\,du
 -\frac{t^{\nu }-\la ^{\nu }}{\nu \la ^{\nu }t^{2\nu }}
 \left\{ 
 1-\left( \frac{t}{t+\la }\right) ^{2\nu }
 \right\} . 
\end{align*}
Since the second term in the last member is of order $O(t^{-\nu -1})$, 
the assertion follows by changing variables with $u=\la v$ in the 
integral of the first term. \\
\thetag{2}~When $\nu <1$, we have by \lref{;integral}, 
\begin{align*}
 K_{2}(t;\la )&=\left\{ 
 \frac{1}{\la (t+\la )}
 \right\} ^{3\nu }\int _{\la }^{t}(u+\la )^{3\nu -1}
 \left( 
 \frac{\la ^{\nu +1}}{u^{\nu +1}}+\frac{\la ^{2\nu }}{u^{2\nu }}
 \right) du\\
 &\le \frac{2}{\la ^{\nu }(t+\la )^{3\nu }}
 \int _{\la }^{t}\frac{(u+\la )^{3\nu -1}}{u^{2\nu }}\,du, 
\end{align*}
from which the assertion follows readily. Here for the inequality, 
we used the fact that $(\la /u)^{\nu +1}\le (\la /u)^{2\nu }$ 
for $u\ge \la $ as $\nu <1$. The case $\nu \ge 1$ can be proved 
similarly (in fact, the limit exists in both cases).  
\end{proof}

We are in a position to prove \tref{;improve2}. 
\begin{proof}[Proof of \tref{;improve2}] 
 \thetag{i}~In view of the decomposition \eqref{;iden4} and 
 \pref{;iasympt}\,\thetag{i}, what to show is that 
 \begin{align}\label{;target1}
  \lim _{t\to \infty }t^{2\nu }J(t)=
  \frac{b^{2\nu }}{2^{\nu }\Gamma (\nu )}\cnu 
  \kappa _{\nu }. 
 \end{align}
 Fix $\la >0$ arbitrarily. By \eqref{;kupper}, \lref{;kest} and 
 the definition \eqref{;kpn} of $\kappa _{\nu }$, we have 
 \begin{align*}
  \limsup _{t\to \infty }t^{2\nu }K(t;\la )
  \le \frac{b^{2\nu }}{2^{\nu }\Gamma (\nu )}\cnu 
  \kappa _{\nu }
  +\frac{b^{2\nu }}{2^{\nu }\Gamma (\nu )}c_{3}\cdot \frac{2}{\nu \la ^{\nu }}
  +c_{3}\pr{-\nu }{b}(\tau _{0}>\la ). 
 \end{align*}
 By \eqref{;jupper} and \eqref{;j1}, we see that the above estimate is also 
 valid with the left-hand side replaced by 
 \begin{align*}
  \pr{-\nu }{b}(\tau _{0}\le \la )\cdot \limsup _{t\to \infty }t^{2\nu }J(t). 
 \end{align*}
 As $\la $ is arbitrary, we obtain by letting $\la \to \infty $,  
 \begin{align}\label{;jls1}
  \limsup _{t\to \infty }t^{2\nu }J(t)\le 
  \frac{b^{2\nu }}{2^{\nu }\Gamma (\nu )}\cnu 
  \kappa _{\nu }. 
 \end{align}
 We may use this upper bound to estimate the second term on the 
 right-hand side of \eqref{;jlower} in such a way that 
 \begin{align*}
  \int _{\la }^{t}\pr{-\nu }{b}(\tau _{0}\in du)J(t+\la -u)
  \le c_{4}K_{2}(t;\la )
 \end{align*} 
 for some $c_{4}$. Then by \eqref{;klower} and \lref{;kest}, 
 \begin{align*}
  &\pr{-\nu }{b}(\tau _{0}\le \la )\cdot \liminf _{t\to \infty }t^{2\nu }J(t)\\
  &\ge \frac{b^{2\nu }}{2^{\nu }\Gamma (\nu )}\cnu 
  \exp \left( -\frac{b^{2}}{2\la }\right) \kappa _{\nu }
  -\left( \frac{b^{2\nu }}{2^{\nu }\Gamma (\nu )}c_{3}+c_{4}\right) 
  \frac{2}{\nu \la ^{\nu }}-c_{3}\pr{-\nu }{b}(\tau _{0}>\la ), 
 \end{align*}
 and hence letting $\la \to \infty $ also yields 
 \begin{align*}
  \liminf _{t\to \infty }t^{2\nu }J(t)\ge 
  \frac{b^{2\nu }}{2^{\nu }\Gamma (\nu )}\cnu 
  \kappa _{\nu }. 
 \end{align*}
 This together with \eqref{;jls1}, proves \eqref{;target1}. \\
 \thetag{ii}~We show that 
 \begin{align}\label{;target2}
  \lim _{t\to \infty }\frac{t^2}{\log t}J(t)=b^2C_{1}. 
 \end{align}By \lref{;kest}\,\thetag{1}, 
 \begin{align}
  K_{1}(t;\la )=&\frac{1}{(t+\la )^2}\left( 
  2\log \frac{t}{\la }-\frac{\la }{t}+1
  \right) +O(t^{-2}), \notag 
 \intertext{hence for any $\la >0$,}
  &\quad \lim _{t\to \infty }\frac{t^{2}}{\log t}K_{1}(t;\la )=2. 
  \label{;klim2}
 \end{align}
 Therefore by \eqref{;kupper} and \lref{;kest}\,\thetag{2}, 
 \begin{align*}
  \limsup _{t\to \infty }\frac{t^2}{\log t}K(t;\la )
  \le \frac{b^2}{2}C_{1}\times 2=b^{2}C_{1}, 
 \end{align*}
 which entails that by \eqref{;jupper}, \eqref{;j1} and arbitrariness of $\la $, 
 \begin{align}\label{;jls2}
  \limsup _{t\to \infty }\frac{t^{2}}{\log t}J(t)\le b^{2}C_{1}. 
 \end{align}
 By this estimate, we bound the second term on the right-hand side 
 of \eqref{;jlower} as 
 \begin{align*}
  \int _{\la }^{t}\pr{-1}{b}(\tau _{0}\in du)J(t+\la -u)
  &\le c_{5}\int _{\la }^{t}\frac{du}{u^{2}}\cdot 
  \frac{\log (t+\la -u)}{(t+\la -u)^2}\\
  &\le c_{5}\log t\cdot K_{2}(t;\la ). 
 \end{align*}
 Combining this with \lref{;kest}\,\thetag{2} and \eqref{;klim2}, 
 we see that 
 \begin{align*}
  \pr{-1}{b}(\tau _{0}\le \la )\cdot 
  \liminf _{t\to \infty }\frac{t^2}{\log t}J(t)
  \ge b^2C_{1}\exp \left( -\frac{b^2}{2\la }\right) -\frac{2c_{5}}{\la }, 
 \end{align*}
 and hence that 
 \begin{align}\label{;jli2}
  \liminf _{t\to \infty }\frac{t^2}{\log t}J(t)\ge b^2C_{1}. 
 \end{align}
 By \eqref{;jls2} and \eqref{;jli2}, we conclude \eqref{;target2}. 
 
 \noindent 
 \thetag{iii}~It suffices to prove 
 \begin{align}\label{;target3}
  \limsup _{t\to \infty }t^{\nu +1}J(t)<\infty  
 \end{align}
 by \eqref{;iden4} and \pref{;iasympt}\,\thetag{iii}. 
 For each fixed $\la >0$, $K_{1}(t;\la )=O(t^{-\nu -1})$ by 
 \lref{;kest}\,\thetag{1}. Therefore by \lref{;kest}\,\thetag{2} and 
 \eqref{;kupper}, 
 \begin{align*}
  \limsup _{t\to \infty }t^{\nu +1}K(t;\la )<\infty . 
 \end{align*}
 Combining this with \eqref{;jupper} and \eqref{;j1} leads to 
 \begin{align*}
  \pr{-\nu }{b}(\tau _{0}\le \la )\cdot 
  \limsup _{t\to \infty }t^{\nu +1}J(t)<\infty , 
 \end{align*}
 and hence \eqref{;target3}. The proof is complete. 
\end{proof}

\begin{proof}[Proof of \tref{;improve}]
 \tref{;improve2} combined with \eqref{;relation} 
 shows the theorem. 
\end{proof}

We close this section with a remark on \tref{;improve}. 
\begin{rem}\label{;rimprove}
\thetag{1}~In the case $\nu =1/2$, namely the case that the dimension 
$\delta =2(\nu +1)$ is $3$, the limit exhibited in 
\tref{;improve}\,\thetag{i} is equal to $0$ since 
\begin{align*}
 1-\nu \kappa _{\nu }
 =1-\frac{1}{2}\int _{1}^{\infty }\frac{dv}{v^{3/2}}=0, 
\end{align*}
which is consistent with the fact that 
\begin{align*}
 \pr{1/2}{a}(\infty >\tau _{b}>t)
 &=\frac{b}{a}\int _{t}^{\infty }
 \frac{a-b}{\sqrt{2\pi s^3}}
 \exp \left\{ 
 -\frac{(a-b)^{2}}{2s}
 \right\} ds\\
 &=\frac{b}{a}\cdot \frac{C_{1/2}}{t^{1/2}}+O(t^{-3/2}). 
\end{align*}
Such cancellation in asymptotic expansions resulting in the remainder 
$O(t^{-3/2})$ in dimension $3$, is observed in a generality by 
\cite[Proposition~2]{vdb}, where obtained are the asymptotic formulae for 
tail probabilities of hitting times of Brownian motion to general nonpolar 
compact sets in dimension greater than or equal to $3$. 

\noindent 
\thetag{2}~On the other hand, when $\nu \neq 1/2$, we 
observe that 
\begin{align}\label{;nocancel}
 1-\nu \kappa _{\nu }>0 \quad \text{for}\quad \nu <1/2 && 
 \text{and} && 1-\nu \kappa _{\nu }<0 \quad \text{for}\quad \nu >1/2, 
\end{align}
that is, the cancellation as in the case $\nu =1/2$ does not take place. 
To verify \eqref{;nocancel}, 
we change variables with $v=x/(1-x)$ in the definition \eqref{;kpn} 
of $\kappa _{\nu }$ to rewrite 
\begin{align*}
 \kappa _{\nu }
 =\int _{1/2}^{1}\frac{1-x^{2\nu }}{x^{\nu +1}(1-x)^{\nu +1}}\,dx, 
\end{align*}
which entails that $\kappa _{\nu }$ is (strictly) increasing 
in $\nu $ since for every $x\in (0,1)$, both 
$1-x^{2\nu }$ and $x^{-\nu }(1-x)^{-\nu }$ are increasing in $\nu $. 

\noindent 
\thetag{3}~In the case that $\delta $ is an integer 
greater than or equal to $4$, the assertions \thetag{ii} and \thetag{iii} of 
\tref{;improve} also agree with \cite[Proposition~2]{vdb}, 
the starting point $x$ of Brownian motion and a compact 
set $K$ therein being taken respectively as $|x|=a$ and 
$K$ the ball of radius $b$ centered at the origin. 
We remark that in the case $\delta =4$ (i.e., $\nu =1$), 
that proposition  
further reveals that the remainder after the term of order $(\log t)/t^{2}$ 
is $O(t^{-2})$; it also indicates that the constant $b^{4}$ appearing in 
the limit in \tref{;improve}\,\thetag{ii} arises from the square of the 
Newtonian capacity $8\pi ^{2}b^{2}$ of a ball of radius $b$ in $\R ^{4}$.  

\noindent 
\thetag{4}~At least for the decay rates of remainders, namely 
$O(t^{-2\nu })$ for $\nu <1$ ($\nu \neq 1/2$), $O((\log t)/t^{2})$ 
for $\nu =1$, and $O(t^{-\nu -1})$ for $\nu >1$ and $\nu =1/2$, 
they may also be deduced from a recent result by Uchiyama \cite{uch} 
that gives asymptotic estimates of the density function of 
$\tau _{b}$ which are valid uniformly in starting points $a$ including the 
case $\nu =0$ as well. 

\noindent 
\thetag{5}~By using an explicit representation for the 
distribution function of $\tau _{b}$, it is shown in 
\cite[Theorem~4.1\,\thetag{3}]{hm12} that in the case 
$\delta $ is an odd integer with $\delta \ge 3$, 
the remainder decays at rate $t^{-\nu -1}$. 

\noindent 
\thetag{6}~Although we do not give details here, we may also prove that 
in the case $\nu >1$, 
\begin{align*}
 \liminf _{t\to \infty }t^{\nu +1}J(t)
 \ge \frac{b^{2\nu }}{2^{\nu }\Gamma (\nu )}\cdot 
 \frac{a^{2}-b^{2}}{2(\nu -1)}, 
\end{align*}
the second factor on the right-hand side being the expected value 
of $\tau _{b}$ under $\pr{-\nu }{a}$. By this estimate, 
\pref{;iasympt}\,\thetag{iii} and \eqref{;relation}, 
the upper limit in \tref{;improve}\,\thetag{iii} is estimated from 
above by 
\begin{align*}
 -\left( 
 \frac{b}{a}
 \right) ^{2\nu }\!
 \left\{ 
 \nu C_{\nu +1}+\frac{b^{2\nu }(a^{2}-b^{2})}{2^{\nu +1}(\nu -1)\Gamma (\nu )}
 \right\} . 
\end{align*}

\end{rem}

\appendix 
\section*{Appendix}
\renewcommand{\thesection}{A}
\setcounter{equation}{0}
\setcounter{prop}{0}
\setcounter{lem}{0}
\setcounter{rem}{0}

We append proofs of auxiliary facts referred to in preceding 
sections. 

\subsection{Proof of \pref{;absrel}}

The relation \eqref{;opposite} may be deduced from the fact 
that the infinitesimal generator of Bessel process with positive index $\nu $ 
is identical with that of Bessel process with the opposite index, 
$h$-transformed by the function $h(x)=x^{2\nu },\,x>0$, that is harmonic in 
the sense that 
\begin{align*}
 \frac{1}{2}h''(x)+\frac{-2\nu +1}{2x}h'(x)=0. 
\end{align*}
For the reader's convenience, we give a proof of the proposition 
by means of a time-change and the Cameron-Martin relation. 
We refer the reader to 
\cite{yor} for the absolute continuity relationship for Bessel processes 
with nonnegative indices, which can also be proved by the same argument 
as below. 

\begin{proof}[Proof of \pref{;absrel}]
 Fix $a>0$ and set $b=\log a$. Let $B=\{ B_{t};t\ge 0\} $ be 
 a one-dimensional Brownian motion starting from $b$. For each 
 $\mu \in \R $, we denote by $\br{\mu }$ the Brownian motion 
 with drift $\mu $: $\br{\mu }_{t}=B_{t}+\mu t,\,t\ge 0$. 
 Let $X$ denote the coordinate process on $\Om =C([0,\infty );\R )$. 
 We define two functionals $A,\,\a $ of $X$ by 
 \begin{align*}
  A_{t}(X):=\int _{0}^{t}e^{2X_{s}}\,ds, \quad 
  \a _{t}(X):=\inf \{ s\ge 0;A_{s}(X)>t\} , \quad t\ge 0, 
 \end{align*}
 where we set $\inf \emptyset =\infty $. By Lamperti's relation 
 (see, e.g., \cite[Section~3]{my}), there exists a Bessel process $\be{\mu }$ 
 with index $\mu $ starting from $a$ such that 
 \begin{align}
  \exp \br{\mu }_{t}&=\be{\mu }_{A_{t}(\br{\mu })}, \quad t\ge 0, 
  \label{;Lamperti}
 \intertext{and hence by the definition of $\a $, }
  \exp \br{\mu }_{\a _{t}(\br{\mu })}&=\be{\mu }_{t}, 
  \quad t<\tau _{0}(\be{\mu }). \label{;Lamperti2}
 \end{align}
 Recall that $\tau _{0}(\be{\mu })=\infty $ a.s.\ for $\mu \ge 0$ 
 while $\tau _{0}(\be{\mu })<\infty $ a.s.\ for $\mu <0$; in fact, 
 \begin{align}\label{;aeq0}
  \tau _{0}(\be{\mu })=A_{\infty }(\br{\mu }) 
 \end{align}
 by \eqref{;Lamperti}. 
 We now fix $t>0$ and take 
 $\Gamma \in \calF _{t}=\sigma (X_{s},s\le t)$. Let $\nu >0$. 
 Then by \eqref{;Lamperti2}, 
 \begin{align}\label{;aeq1}
  P\left( \be{\nu }\in \Gamma \right) =
  P\left( 
  \exp \br{\nu }_{\a _{\cdot }(\br{\nu })}\in \Gamma 
  \right) . 
 \end{align}
 By definition, $\a _{t}(X)$ is a stopping time for the coordinate 
 process $X$. Therefore the Cameron-Martin formula entails that \eqref{;aeq1} 
 is equal to 
 \begin{align*}
  &e^{-2\nu b}E\left[ 
  \exp \left\{ 
  2\nu \br{-\nu }_{\a _{t}(\br{-\nu })}
  \right\} ; 
  \exp \br{-\nu }_{\a _{\cdot }(\br{-\nu })}\in \Gamma ,
  \a _{t}(\br{-\nu })<\infty 
  \right] \\
  &=a^{-2\nu }E\left[ 
  \bigl( \be{-\nu }_{t}\bigr) ^{2\nu }; 
  \be{-\nu }\in \Gamma ,\tau _{0}(\be{-\nu })>t
  \right] . 
 \end{align*}
 Here for the second line, we used \eqref{;Lamperti2}, and the 
 equivalence between $\a _{t}(\br{-\nu })<\infty $ and 
 $\tau _{0}(\be{-\nu })>t$ that  follows from \eqref{;aeq0}. 
 The proof is complete. 
\end{proof}

\begin{rem}\label{;gamma}
 By \eqref{;aeq0} and Dufresne's identity (see, e.g., \cite[Section~2]{my}), 
 it holds that under $\pr{-\nu }{a}$, 
 \begin{align*}
  \tau _{0}(R)\stackrel{(d)}{=}\frac{a^{2}}{2\gamma _{\nu }}. 
 \end{align*}
 Here $\gamma _{\nu }$ is a gamma random variable with parameter $\nu $. 
 Therefore one may find that 
 \begin{align*}
  \ex{\nu }{a}\left[ 
  \left( \frac{a}{R_{t}}\right) ^{2\nu }
  \right] &=\pr{-\nu }{a}(\tau _{0}>t) \\
  &=P\bigl( \gamma _{\nu }<a^{2}/(2t)\bigr) \\
  &=\frac{a^{2\nu }}{2^{\nu }\Gamma (\nu )}
  \int _{t}^\infty \frac{ds}{s^{\nu +1}}
  \exp \left( -\frac{a^2}{2s}\right) , 
 \end{align*}
 where the first equality follows from \eqref{;opposite}. 
\end{rem}

\subsection{Proof of \pref{;taboo}}
The proposition asserts that Bessel process with a negative index 
conditioned to stay positive is nothing but Bessel process with the opposite 
index. This seems to be a well-known fact and to have been rediscovered by 
several authors, see e.g., \cite[Section~7]{sh}; we also refer to \cite{cmm} 
for the case of drifted Brownian motions with nonsingular drift coefficients. 
The case $\nu =1/2$ goes back to Knight \cite[Theorem~3.1]{kn}. Roynette, 
Yor et al.\ extensively studied limit laws of Brownian motion normalized by 
various kinds of weight processes other than $\ind _{\{ \tau _{0}>t\} }$, 
referring to those studies as {\it penalisation problems}; see \cite{roy} 
and references therein, where usage of Scheff\'e's lemma we employ 
in the proof below is also found. For related studies concerning 
quasi-stationary distributions (Yaglom limits), refer to \cite{mv}. 

\begin{proof}[Proof of \pref{;taboo}]
Fix arbitrarily a sequence $\{ t_{n}\} _{n\in \N }$ of positive reals 
such that $\lim \limits_{n\to \infty }t_{n}=\infty $. Let 
$N\in \N $ be such that $t_{n}>t$ for all $n\ge N$. By \pref{;absrel} and 
the Markov property, we have for every $n\ge N$, 
\begin{align}
 \pr{-\nu }{a}(A\,|\,\tau _{0}>t_{n})
 &=\frac{\ex{\nu }{a}[(R_{t_{n}})^{-2\nu };A]}
 {\ex{\nu }{a}[(R_{t_{n}})^{-2\nu }]} \notag \\
 &=\ex{\nu }{a}\left[ 
 M_{n};A
 \right] , \label{;preq1d}
\end{align}
where we set 
\begin{align*}
 M_{n}=\frac{\ex{\nu }{R_{t}}[(R_{t_{n}-t})^{-2\nu }]}
 {\ex{\nu }{a}[(R_{t_{n}})^{-2\nu }]}. 
\end{align*}
By \lref{;keylem}, $M_{n}\to 1$ a.s.\ as $n\to \infty $. Moreover, 
$M_{n}\ge 0$ a.s.\ and 
$\ex{\nu }{a}[M_{n}]=1$ for all $n\ge N$. Hence by 
Scheff\'e's lemma, $\ex{\nu }{a}[|M_{n}-1|]\xrightarrow[n\to \infty ]{}0$, 
which entails that \eqref{;preq1d} converges to $\pr{\nu }{a}(A)$ 
as $n \to \infty $. Therefore we arrive at the conclusion as 
$\{ t_{n}\} _{n\in \N }$ is arbitrary. 
\end{proof}

\subsection{Statements and proofs of \pref{;unique} and \lref{;integral}}
\begin{prop}\label{;unique}
 Let $\nu >0$ and $a>0$. Under $\pr{\nu }{a}$, the time at which 
 the Bessel process $R$ attains its global infimum $I_\infty $ is 
 a.s.\ unique. 
\end{prop}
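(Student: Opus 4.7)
The plan is to contain the event of non-uniqueness in a countable union of $\pr{\nu }{a}$-null events indexed by the positive rationals. First, by \eqref{;idistr} we have $I_{\infty }<a$ almost surely, so $\ri >0$ a.s.; hence failure of uniqueness of the minimizer implies the existence of $0<t_{1}<t_{2}$ with $R_{t_{1}}=R_{t_{2}}=I_{\infty }$, and density of $\Q $ provides a rational $q\in (t_{1},t_{2})$. For such $q$, both $\inf_{0\le s\le q}R_{s}$ and $\inf_{s\ge q}R_{s}$ must equal $I_{\infty }$. Setting
\[
 A_{q}:=\left\{ \inf_{0\le s\le q}R_{s}=\inf_{s\ge q}R_{s}\right\} ,
\]
the non-uniqueness event is thus contained in $\bigcup_{q\in \Q \cap (0,\infty )}A_{q}$, so it suffices to prove $\pr{\nu }{a}(A_{q})=0$ for each fixed rational $q>0$.

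Second, fix such a $q$ and put $m_{q}:=\inf_{0\le s\le q}R_{s}$, which is $\calF _{q}$-measurable with $0\le m_{q}\le R_{q}$. By the Markov property at time $q$, conditional on $\calF _{q}$ the random variable $\inf_{s\ge q}R_{s}=\inf_{t\ge 0}R_{q+t}$ has the law of $I_{\infty }$ under $\pr{\nu }{R_{q}}$. By \eqref{;idistr} this conditional law is absolutely continuous on $(0,R_{q})$ with density $2\nu y^{2\nu -1}/R_{q}^{2\nu }$, and in particular has no atom. Thus the conditional probability that this post-$q$ infimum equals the $\calF _{q}$-measurable value $m_{q}$ is zero, giving $\pr{\nu }{a}(A_{q})=0$; a countable union bound completes the proof.

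Since \eqref{;idistr} supplies both ingredients---the strict inequality $I_{\infty }<a$ and the atomlessness of the conditional law---there is no serious obstacle here. The only point requiring care is to treat $m_{q}$ correctly as an $\calF _{q}$-measurable quantity when integrating against the conditional distribution of $\inf_{s\ge q}R_{s}$, which is the standard ``split the path at a rational time'' argument for uniqueness of minimizers.
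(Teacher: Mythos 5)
Your argument is correct, but it takes a genuinely different route from the paper's. The paper introduces the first and last times $\ri =\inf \mathcal{T}$ and $\bar{\rho }_{\infty }=\sup \mathcal{T}$ at which the infimum is attained, where $\mathcal{T}=\{ s\ge 0;\,R_{s}=I_{\infty }\} $ is a.s.\ compact by transience, shows via the Markov property and \eqref{;idistr} that $\pr{\nu }{a}(\ri >t)=\pr{\nu }{a}(\bar{\rho }_{\infty }\ge t)=\ex{\nu }{a}\bigl[ (I_{t}/R_{t})^{2\nu }\bigr] $ for every $t$, and concludes from $\ri \le \bar{\rho }_{\infty }$ that two a.s.-ordered variables with the same law must coincide a.s. You instead cover the non-uniqueness event by the countable union $\bigcup _{q\in \Q \cap (0,\infty )}A_{q}$ and kill each $A_{q}$ using the atomlessness of the conditional law of $\inf _{s\ge q}R_{s}$ given $\calF _{q}$. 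The core probabilistic input is the same in both cases---the Markov property at a deterministic time together with the continuity of the distribution function $y\mapsto (y/x)^{2\nu }$ of $I_{\infty }$; indeed the paper's identity $\pr{\nu }{a}(\bar{\rho }_{\infty }\ge t)=\pr{\nu }{a}(\ri >t)$ is exactly the statement $\pr{\nu }{a}(A_{t})=0$ for fixed $t$. What differs is the passage from ``null for each fixed time'' to the a.s.\ statement: you use the density of $\Q $, the paper uses equality in law of the ordered pair. Your route is the more standard and more robust one (only atomlessness of the law of $I_{\infty }$ is needed, not its explicit form), while the paper's route has the side benefit of producing the formula $\pr{\nu }{a}(\ri >t)=\ex{\nu }{a}[(I_{t}/R_{t})^{2\nu }]$, which is quoted later in the proof of \tref{;tcor}. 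One small point: your proof shows that a minimizer, if it exists, is unique; for the statement as phrased you should also record that $I_{\infty }$ is a.s.\ attained, which follows since $R$ is continuous and $\lim _{s\to \infty }R_{s}=\infty $ a.s.\ for $\nu >0$, so that the global infimum equals the infimum over a compact time interval.
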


\def\T {\mathcal{T}}
\def\rb {\bar{\rho }_{\infty }}
\begin{proof}
 Write $\T \equiv \T (R)=\{ s\ge 0;R_{s}=I_{\infty }\} $. 
 Set $\ri =\inf \T $ as in \sref{;results} and $\rb =\sup \T $. Note that 
 $\T $ is compact a.s.\ since $R$ is continuous and 
 $\lim \limits_{s\to \infty }R_{s}=\infty $ a.s. It then holds that 
 \begin{align}\label{;a2-1}
  \{ \ri >t\} =\{ R_{s}>I_{\infty }\text{ for all }s\in [0,t]\} 
  \quad \text{a.s., }
 \end{align}
 namely the indicator functions of these two events are 
 equal a.s. Indeed, it is obvious that the left-hand event is included in 
 the right-hand event; for converse inclusion, since $t\notin \T $ and 
 $\T $ is compact a.s., we have $t<\inf \T =\ri $ a.s. By continuity, 
 the right-hand side of \eqref{;a2-1} is written as 
 $\bigl\{ \inf \limits_{0\le s\le t}R_{s}>I_{\infty }\bigr\} $, and hence 
 we have 
 \begin{align*}
  \{ \ri >t\} =\bigl\{ I_{t}>\inf _{s\ge t}R_{s}\bigr\} \quad \text{a.s.}
 \end{align*}
 Therefore by the Markov property and \eqref{;idistr}, 
 \begin{align*}
  \pr{\nu }{a}(\ri >t)=\ex{\nu }{a}\left[ 
  \left( \frac{I_{t}}{R_{t}}\right) ^{2\nu }
  \right] . 
 \end{align*}
 Similarly 
 \begin{align*}
  \{ \rb <t\} =\bigl\{ I_{t}<\inf _{s\ge t}R_{s}\bigr\} \quad \text{a.s., }
 \end{align*}
 from which it also follows that 
 \begin{align*}
  \pr{\nu }{a}(\rb \ge t)&=\pr{\nu }{a}
  \bigl( I_{t}\ge \inf _{s\ge t}R_{s}\bigr) \\
  &=\ex{\nu }{a}\left[ 
  \left( \frac{I_{t}}{R_{t}}\right) ^{2\nu }
  \right] . 
 \end{align*}
 By the dominated convergence theorem, the mapping 
 $[0,\infty )\ni t\mapsto \ex{\nu }{a}[(I_{t}/R_{t})^{2\nu }]$ 
 is continuous. Combining these we see that 
 $\ri $ and $\rb $ have the same distribution, which implies that 
 $\ri =\rb $ a.s.\ since $\ri \le \rb $. This ends the proof. 
\end{proof}

\begin{lem}\label{;integral}
 Let $c>0$ and $\al ,\beta \in \R $. For all $x\ge c$ one has 
 \begin{align*}
  \int _{c}^{x}\frac{dy}{y^{\al }(x+c-y)^{\beta }}
  =\left\{ \frac{1}{c(x+c)}\right\} ^{\al +\beta -1}
  \!\!\!\int _{c}^{x}(y+c)^{\al +\beta -2}
  \left( 
  \frac{c^{\al }}{y^{\al }}+\frac{c^{\beta }}{y^{\beta }}
  \right) dy. 
 \end{align*}
\end{lem}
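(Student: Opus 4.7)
The identity is a purely real-analytic statement, and my plan is to reduce both sides to a common integral via suitable changes of variables on a bounded interval away from $0$.

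First, I would transform the left-hand side by the substitution $u=(x+c-y)/y$, equivalently $y=(x+c)/(u+1)$, which sends the endpoints $y=c,x$ to $u=x/c,c/x$ and yields $x+c-y=u(x+c)/(u+1)$ together with $dy=-(x+c)/(u+1)^2\,du$. Tracking the resulting powers of $u+1$ and $x+c$ gives
\[
\int_c^x \frac{dy}{y^\alpha(x+c-y)^\beta} = \frac{1}{(x+c)^{\alpha+\beta-1}}\int_{c/x}^{x/c} \frac{(u+1)^{\alpha+\beta-2}}{u^\beta}\,du.
\]

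Next, on the right-hand side, the simple rescaling $y=cv$ yields
\[
\int_c^x (y+c)^{\alpha+\beta-2}\!\left(\frac{c^\alpha}{y^\alpha}+\frac{c^\beta}{y^\beta}\right)dy = c^{\alpha+\beta-1}\int_1^{x/c}(v+1)^{\alpha+\beta-2}\!\left(\frac{1}{v^\alpha}+\frac{1}{v^\beta}\right)dv,
\]
so the full right-hand side of the lemma equals $(x+c)^{1-\alpha-\beta}$ times this last integral.

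To match the two reduced forms, I would split $\int_{c/x}^{x/c}$ at $u=1$ and apply the involution $u=1/v$ to the piece on $[c/x,1]$. Since $(u+1)/u=v+1$ and $du=-dv/v^2$, this converts $(u+1)^{\alpha+\beta-2}u^{-\beta}\,du$ into $(v+1)^{\alpha+\beta-2}v^{-\alpha}\,dv$ on $[1,x/c]$, effectively swapping the roles of $\alpha$ and $\beta$. Adding this to the contribution from $[1,x/c]$ already present yields exactly the symmetrized integrand appearing in the reduced right-hand side, completing the proof.

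The calculation is essentially routine and I foresee no real obstacle; the only point requiring care is the power bookkeeping in the first substitution, where the contributions $(u+1)^{-\alpha}$, $u^\beta(x+c)^\beta/(u+1)^\beta$, and $-(x+c)/(u+1)^2$ coming from $y^\alpha$, $(x+c-y)^\beta$, and $dy$ must be combined to produce the clean exponent $\alpha+\beta-2$ on $u+1$.
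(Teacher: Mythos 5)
Your argument is correct, and it is essentially the paper's own proof reorganized: your involution $u\mapsto 1/v$ is precisely the reflection $y\mapsto x+c-y$ (which the paper invokes first to symmetrize the integrand in $\al$ and $\beta$), and your substitution $y=(x+c)/(u+1)$ combined with the rescaling $y=cv$ amounts to the paper's single change of variables $z=c(x+c)/(y+c)$. The power bookkeeping you flag does work out as you state, so there is nothing to repair.
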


\begin{proof}
 By changing variables with $y=x+c-z$, we see that the left-hand side of 
 the claimed identity is symmetric with respect to $\al $ and $\beta $, namely 
 \begin{align*}
  \int _{c}^{x}\frac{dz}{z^{\beta }(x+c-z)^{\al }}
  =\int _{c}^{x}\frac{dz}{z^{\al }(x+c-z)^{\beta }}, 
 \end{align*}
 which entails that it is equal to 
 \begin{align*}
  \int _{c}^{(x+c)/2}\left\{ 
  \frac{1}{z^{\al }(x+c-z)^{\beta }}+\frac{1}{z^{\beta }(x+c-z)^{\al }}
  \right\} dz. 
 \end{align*}
 Changing variables with $z=c(x+c)/(y+c)$ leads to the conclusion. 
\end{proof}
\medskip 

\noindent 
{\bf Acknowledgements.} The author would like to thank an anonymous 
referee for valuable comments, which helped to improve considerably the paper. 


\end{document}